\def\tr{\mathrm{tr}}
\def\C{\mathscr{C}}
\def\diag{\mathrm{diag}}
\def\GL{\mathrm{GL}}    
\def\Mat{\mathrm{Mat}}   
\def\I{\mathrm{I}}
\def\id{\mathsf{id}}
\def\im{\mathrm{Im}}
\def\PSU{\mathrm{PSU}}
\def\PSL{\mathrm{PSL}}
\def\PSp{\mathrm{PSp}}
\def\Or{\mathrm{O}}
\def\SL{\mathrm{SL}}
\def\SO{\mathrm{SO}}
\def\Sym{\mathrm{Sym}}
\def\Alt{\mathrm{Alt}}
\def\P{{\rm P}}
\def\N{\mathcal{N}}
\def\T{\mathsf{T}}
\def\operp{\perp}
\def\equad{\quad \textrm{ and } \quad}
\newcommand{\Z}{\mathbb{Z}}   
\newcommand{\F}{\mathbb{F}}
\newtheorem{theorem}{Theorem}[section] 
\newtheorem{lemma}[theorem]{Lemma}     
\newtheorem{corollary}[theorem]{Corollary}
\newtheorem{proposition}[theorem]{Proposition}
\theoremstyle{definition}
\numberwithin{equation}{section}
\begin{document}

\title[The $(2,3)$-generation of the finite simple odd-dimensional\ldots]{The $(2,3)$-generation of the finite simple\\ 
odd-dimensional orthogonal groups}

\author{M.A. Pellegrini and M.C. Tamburini Bellani}
\email{marcoantonio.pellegrini@unicatt.it}
\email{mariaclara.tamburini@gmail.com}

\address{Dipartimento di Matematica e Fisica, Universit\`a Cattolica del Sacro Cuore,\\
Via della Garzetta 48, 25133 Brescia, Italy}

\begin{abstract}
The complete classification of the finite simple groups that are $(2,3)$-generated is a problem which is still 
open only for 
orthogonal groups. Here, we construct $(2, 3)$-generators for the finite odd-dimensional orthogonal
groups $\Omega_{2k+1}(q)$, $k\geq 4$.
As a byproduct we also obtain $(2,3)$-generators for  $\Omega_{4k}^+(q)$ with $k\geq 3$ and $q$ odd, 
and for $\Omega_{4k+2}^\pm(q)$ with $k\geq 4$ and $q\equiv \pm 1 \pmod 4$.
\end{abstract}

\keywords{Orthogonal group; simple group; generation}
\subjclass[2010]{20G40, 20F05}

\maketitle

\section{Introduction}

A group is said to be $(2,3)$-generated if it can be generated by an involution and an element of order
$3$, equivalently if it is an epimorphic image of $C_2 \ast C_3\cong \PSL_2(\Z)$.
In 1996 (see \cite{LS}) it was shown that the symplectic groups $\PSp_4(q)$, with $q=2^f,3^f$,
are not $(2,3)$-generated and that, apart from the members of these two infinite families and a finite
number of undetermined exceptions, the finite simple classical groups, defined over the Galois field $\F_q$, are $(2,3)$-generated.
Since then, many authors contributed to a constructive solution of the $(2,3)$-generation problem of these groups 
(e.g., see \cite{TW,TWG}).
As a consequence, the list $\mathcal{L}$ of the known exceptions consists now of the following ten groups: 
$\PSL_2(9)$, $\PSL_3(4)$, $\PSL_4(2)$, $\PSU_3(3^2)$, $\PSU_3(5^2)$, $\PSU_4(2^2)\cong \PSp_4(3)$, 
$\PSU_4(3^2)$, $\PSU_5(2^2)$, $\P\Omega_8^+(2)$ and $\P\Omega_8^+(3)$.
This list is complete for linear, unitary and symplectic groups, as shown in \cite{SL12,Unit,Sp}.

In \cite{PT8} we proved that the finite simple $8$-dimensional orthogonal groups are $(2,3)$-gener\-ated,
with the exceptions of $\P\Omega_8^+(2)$ and $\P\Omega_8^+(3)$ found by M. Vsemirnov \cite{Max}.
In this paper, we consider orthogonal groups of dimension $n\geq 9$ and prove the following constructive result.

\begin{theorem}\label{main}
Assume $q$ is odd. The following orthogonal groups are $(2,3)$-generated:
\begin{itemize}
\item[(i)] $\Omega_{2k+1}(q)$ with $k\geq 4$;
\item[(ii)] $\Omega_{4k}^+(q)$ with $k\geq 3$;
\item[(iii)] $\Omega_{4k+2}^+(q)$ with $k\geq 4$ and $q\equiv 1 \pmod 4$;
\item[(iv)] $\Omega_{4k+2}^-(q)$ with $k\geq 4$ and $q\equiv 3 \pmod 4$.
\end{itemize}
\end{theorem}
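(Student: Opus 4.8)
The plan is to exhibit, for each family, an explicit pair consisting of an involution $x$ and an element $y$ of order $3$ lying in the relevant orthogonal group, and then to prove that $H=\langle x,y\rangle$ exhausts $\Omega$. I would fix once and for all a convenient Gram matrix for the quadratic form (say, with most of its weight on the anti-diagonal), so that the defining conditions become transparent, and write $x$ and $y$ as matrices whose entries depend on the field $\F_q$ and on the dimension parameter $k$ in a uniform way. The first block of checks is purely computational: that $x^2=\id$ and $y^3=\id$ with $y\neq\id$, that both preserve the form, and that both lie in $\Omega$ rather than merely in $\SO$, i.e.\ that they have trivial spinor norm. Tracking the discriminant of the form here is exactly what forces the congruence hypotheses on $q$ in the even-dimensional cases: whether dimension $4k+2$ carries a $+$ or a $-$ form is governed by $q\bmod 4$, which is why (iii) and (iv) split according to $q\equiv 1$ and $q\equiv 3\pmod 4$, while the $4k$-dimensional form in (ii) is always of $+$ type.

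The heart of the matter is the generation statement $H=\Omega$, which I would attack through Aschbacher's classification of the maximal subgroups of a classical group: it suffices to show that $H$ lies in none of the geometric classes $\Cl_1,\dots,\Cl_8$ nor in the class $\ClS$ of (almost) simple irreducible subgroups. I would first establish that the natural module $V$ is an absolutely irreducible $H$-module, which disposes of the reducible class $\Cl_1$; a natural route is to show that the vectors obtained by repeatedly applying $x$ and $y$ to a single basis vector already span $V$, and then that the centralizer of $H$ in the matrix algebra is scalar. The imprimitive class $\Cl_2$ and the field-extension class $\Cl_3$ I would rule out by producing an element of $H$ whose characteristic polynomial, or whose eigenvalue multiplicities, are incompatible with any block system or any $\F_{q^r}$-structure; the tensor and tensor-induced classes $\Cl_4,\Cl_7$ are excluded for similar reasons (the odd dimension $2k+1$ already obstructs most tensor factorisations), the subfield class $\Cl_5$ by checking that the entries of $x$ and $y$ generate $\F_q$, and the remaining classical class $\Cl_8$ by the uniqueness up to scalars of the $H$-invariant form.

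The main obstacle, as always in this circle of problems, is the class $\ClS$ of almost simple groups acting irreducibly in dimension $n$: there is no geometric invariant to exploit, so one must rule out each abstract possibility. My plan is to single out a well-chosen word $w$ in $x$ and $y$ --- typically $w=xy$ --- compute its order, and show that $|w|$ is so large (for instance, divisible by a primitive prime divisor of $q^{e}-1$ for a suitable $e$ close to $n$, or simply exceeding the maximal element order available) that no almost simple group admitting an irreducible representation of degree $n$ over $\F_q$ can contain it; here I would lean on the known bounds for element orders and on the classification of cross-characteristic and low-degree representations. Making this argument uniform in both $k$ and $q$ is the delicate point, and it is what dictates the lower bounds $k\geq 4$. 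Finally, for the even-dimensional byproducts (ii)--(iv) I would observe that the very same construction, performed one dimension down, yields generators of an even-dimensional orthogonal group of the type prescribed by the discriminant computation above, so that essentially the same irreducibility and Aschbacher analysis applies, with only the class-$\ClS$ bookkeeping to be redone in the new dimension.
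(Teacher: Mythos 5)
Your overall scaffolding --- explicit generators, spinor-norm checks, absolute irreducibility, elimination of the geometric Aschbacher classes, and the discriminant analysis explaining the congruence conditions in (ii)--(iv) --- matches the paper's first steps. But the step you yourself label as delicate, ruling out the class $\mathcal{S}$ uniformly in $k$ and $q$, is precisely where your plan has no mechanism, and it is the step the paper is engineered to avoid. For arbitrary $n$ there is no classification of the almost simple irreducible subgroups of $\Omega_n(q)$ against which to check, and the order-based arguments you invoke do not suffice on their own: an element of order divisible by a large primitive prime divisor of $q^e-1$ does restrict the possibilities (by Guralnick--Penttila--Praeger--Saxl), but the resulting list still contains, for instance, $\Alt(n+1)$ and $\Alt(n+2)$ acting on the deleted permutation module --- these contain elements of every prime order up to $n+2$, hence possibly of ppd order, and their element orders grow superpolynomially in $n$, so no bound on element orders excludes them. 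Moreover, you would have to produce an explicit word in your (unspecified) generators whose order is divisible by such a prime, uniformly in $n$ and $q$; that is a substantial construction in itself, and nothing in your proposal delivers it.

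The paper resolves this with two devices absent from your plan. For $n\in\{9,11,13,17\}$ the generators are arranged so that $(xy)^{n-2}$ is a bireflection; Guralnick--Saxl's theorem on irreducible groups containing bireflections then collapses all remaining possibilities (geometric and $\mathcal{S}$ alike) to the deleted-permutation-module case, which is excluded by hand. For general $n$ the paper never performs a maximal-subgroup analysis in dimension $n$ at all: the generators are built so that $\tau=[x,y]^{24}$ has order $p$ and acts only on a $9$-dimensional nondegenerate subspace $S_9$; one proves that $\langle y,\tau\rangle$ induces $\Omega_9(q)$ on $S_9$ (a dimension-$9$ problem, where the maximal subgroups are completely tabulated in Bray--Holt--Roney-Dougal), and then grows $\diag(\I_{n-9},\Omega_9(q))\leq H$ one dimension at a time, using maximality of the stabilizer of a nondegenerate $1$-space, until $H=\Omega_n^\epsilon(q)$. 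If you wish to keep your direct approach, you must replace your class-$\mathcal{S}$ paragraph by one of these mechanisms, or by a concrete ppd construction together with a separate treatment of the alternating-group and remaining GPPS cases. Two smaller corrections: odd dimension does not obstruct tensor decompositions ($9,15,21,25,\dots$ are composite, and indeed the paper must rule out $\Omega_3(q)\otimes\Omega_3(q)$-type subgroups for $n=9$); and the bounds $k\geq 4$ come from where the constructions are defined and from prior results in dimensions $5$ and $7$, not from the uniformity of any class-$\mathcal{S}$ estimate.
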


We recall that the $(2,3)$-generation of $\Omega_5(q)\cong \PSp_4(q)$, when $\gcd(q,6)=1$,  was proved in \cite{CDM} (see also
\cite{PTV}).
Notice that the groups $\Omega_5(3^f)$ are not $(2,3)$-generated, but they are $(2,5)$-generated, see \cite{Ki}.
In \cite{Sp6} it was proved that the groups $\Omega_7(q)$ are $(2,3)$-generated for all odd $q$.
As a consequence of all this, the constructive $(2,3)$-generation  problem for 
the finite simple classical groups  remains open only for the following orthogonal groups:
\begin{itemize}
\item[(i)]  $\P\Omega_{2k}^\pm(q)$ with $k\geq 5$ and $q$ even; 
\item[(ii)] $\P\Omega_{10}^\pm(q)$, $\P\Omega_{14}^\pm(q)$, $q$ odd;
\item[(iii)] $\P\Omega_{4k}^-(q)$ with $k\geq 3$ and $q$ odd; 
\item[(iv)] $\P\Omega_{4k+2}^+(q)$ with $k\geq 4$ and $q\equiv 3 \pmod 4$;
\item[(v)] $\P\Omega_{4k+2}^-(q)$ with $k\geq 4$ and $q\equiv 1 \pmod 4$.
\end{itemize}

In our proof of Theorem \ref{main}, the cases $n\in \{9,11,13,17\}$ are dealt with in Section \ref{11-17}, 
where we use slightly different generators to make the proofs more efficient.
For the general case, the generators are given in Section \ref{gen}. The corresponding proofs are in Section  \ref{gene} 
for   $n\in\{15, 18, 19\}$ or $n\geq 21$, and in Section~\ref{16-20} for $n\in \{12, 16, 20\}$.

\section{Preliminary results}\label{prel}

Let $\F_q$ be the Galois field of order $q = p^f$, a power of the prime $p>2$, and
let $\F$ be the algebraic closure of the field $\F_p$. 
We make $\GL_n(\F)$ act on the left on $V=\F^n$, whose  canonical basis  is $\C=\{e_1, e_2, \ldots,e_{n} \}$.

Up to isometry there are two nondegenerate quadratic forms on $\F_q^n$.
If $n$ is even, these two forms are not similar: we say that the quadratic form has sign $+$ if the dimension of any maximal
totally singular subspace is $\frac{n}{2}$; it has sign $-$ if the dimension of such a space is $\frac{n}{2}-1$.
The corresponding isometry groups are denoted by $\Or_n^+(q)$ and $\Or_n^-(q)$.
If  $n$ is odd, the two quadratic forms are similar. Hence, the corresponding isometry groups are isomorphic and 
will be denoted by $\Or^\circ_n(q)$, or simply by $\Or_n(q)$.
In short, we will write $\Or^\epsilon_n(q)$, where $\epsilon=\circ$ if $n$ is odd, 
$\epsilon = +$ or $\epsilon= -$ if $n$ is even.

If $J$ is the Gram matrix of the symmetric bilinear form $\beta$ associated to a nondegenerate quadratic form $Q$
on $\F_q^n$, we have
$$\beta(v,w)=v^\T  J w \equad 2Q(v)=\beta(v,v) \quad \textrm{ for all } v,w \in \F_q^n.$$
In particular, since $q$ is assumed to be odd, the form $Q$ is determined by $\beta$, i.e., by $J$.
When $n$ is even, the isometry group of $J$ is $\Or_n^+(q)$ if either $\det(J)$ is a square in $\F_q^*$ and 
$\frac{n(q-1)}{4}$ is even or 
$\det(J)$ is a nonsquare and $\frac{n(q-1)}{4}$ is odd;
$\Or_n^-(q)$ otherwise (see \cite[Proposition 1.5.42]{Ho}).

The group $\Omega_n^\epsilon(q)$ is the derived subgroup of $\Or_n^\epsilon(q)$  and has index $2$ in 
$\SO_n^\epsilon(q)$, the subgroup of $\Or_n^\epsilon(q)$ consisting of matrices of determinant $1$.
Alternatively, $\Omega_n^\epsilon(q)$ consists of the elements in $\SO_n^\epsilon(q)$ with spinor norm in $(\F_q^\ast)^2$. 
We recall that the spinor norm $\theta: \Or_n^\epsilon(q) \to \frac{\F_q^\ast}{(\F_q^\ast)^2}$ 
is a homomorphism. 
For any nonsingular $v \in \F_q^n$, the reflection $r_v$, of center $\langle v \rangle$, acts as 
$w\mapsto  w - Q(v)^{-1} \beta(w,v) v$ for all $w \in V$. Moreover, $\theta(r_v) = Q(v)(\F_q^\ast)^2$ 
(see \cite[pages 145, 163 and 164]{T}.

Given an eigenvalue $\lambda$ of $g \in \GL_n(\F)$,
write $V_\lambda(g)$ for the corresponding eigenspace. 
The characteristic polynomial of $g$ will be denoted by $\chi_g(t)$.
Let $\omega\in \F$ be a primitive cube root of $1$.

\begin{lemma}\label{complementL}
Let $H$ be a subgroup of $\GL_n(\F)$ and $U$ be a proper $H$-invariant subspace. 
Suppose that $g\in H$ has the eigenvalue $\lambda\in \F$.
If the restriction $g_{|U}$ does not have the eigenvalue $\lambda$, then there exists a
$H^\T$-invariant subspace $\overline{U}$, with $\dim(\overline{U}) = n-\dim (U)$, such that
$V_\lambda(g^\T) \leq \overline{U}$.
\end{lemma}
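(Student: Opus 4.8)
The plan is to take for $\overline{U}$ the orthogonal complement of $U$ with respect to the standard non-degenerate symmetric pairing $\langle v, w\rangle := v^\T w$ on $V=\F^n$, that is,
\[
\overline{U} := \{ w \in V : v^\T w = 0 \text{ for all } v \in U\}.
\]
Because this pairing is non-degenerate, the dimension count $\dim(\overline{U}) = n - \dim(U)$ is automatic, so the required dimension is built into the construction.

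Next I would verify that $\overline{U}$ is $H^\T$-invariant. The key identity is that transposition is the adjunction for this pairing: $\langle hv, w\rangle = (hv)^\T w = v^\T h^\T w = \langle v, h^\T w\rangle$ for all $h\in H$ and $v,w\in V$. Hence, for $w \in \overline{U}$ and any $u \in U$, one has $\langle u, h^\T w\rangle = \langle h u, w\rangle = 0$, since $hu \in U$ by the $H$-invariance of $U$. Thus $h^\T w \in \overline{U}$, and as every element of $H^\T$ has the form $h^\T$ with $h \in H$, the subspace $\overline{U}$ is indeed $H^\T$-invariant.

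The heart of the argument is the inclusion $V_\lambda(g^\T)\le \overline{U}$, and this is where the hypothesis on $g_{|U}$ enters. Since $U$ is $g$-invariant (as $g\in H$), the operator $(g-\lambda\I)_{|U}\colon U\to U$ is well defined, and it is injective precisely because $g_{|U}$ does not have the eigenvalue $\lambda$; being an endomorphism of the finite-dimensional space $U$, it is therefore bijective. Consequently every $u\in U$ can be written as $u=(g-\lambda\I)u'$ for some $u'\in U$. Now take $w\in V_\lambda(g^\T)$, so that $g^\T w=\lambda w$, and compute using adjunction:
\[
\langle u, w\rangle = \langle (g-\lambda\I)u', w\rangle = \langle u', g^\T w\rangle - \lambda\langle u', w\rangle = \lambda\langle u', w\rangle - \lambda\langle u', w\rangle = 0.
\]
As this holds for every $u\in U$, we conclude $w\in \overline{U}$, which gives $V_\lambda(g^\T)\le \overline{U}$.

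The only delicate point — and the step I would watch most carefully — is the bijectivity of $(g-\lambda\I)_{|U}$, which is exactly the place where the assumption that $g_{|U}$ misses the eigenvalue $\lambda$ is used; everything else is a formal consequence of the adjunction property of the transpose. I note in passing that $\lambda$ is automatically an eigenvalue of $g^\T$ as well, since $\chi_{g^\T}=\chi_g$, so that $V_\lambda(g^\T)\neq 0$ and the statement is not vacuous.
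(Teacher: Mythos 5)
Your proof is correct, and it reaches the same subspace as the paper by a genuinely different, coordinate-free route. The paper chooses a matrix $P$ adapting the basis to $U$, so that $P^{-1}HP$ is block upper triangular and $P^\T H^\T P^{-\T}$ is block lower triangular; it then solves the eigenvector equation blockwise, using that the transposed restriction block $A^\T$ misses the eigenvalue $\lambda$ to force the top component of any $\lambda$-eigenvector of $P^\T g^\T P^{-\T}$ to vanish, and finally sets $\overline{U}=P^{-\T}\langle e_{k+1},\ldots,e_n\rangle$. Your $\overline{U}=\{w : v^\T w=0 \ \text{for all } v\in U\}$ is in fact literally the same subspace (the first $k$ columns of $P$ span $U$, so $P^{-\T}\langle e_{k+1},\ldots,e_n\rangle$ is exactly the annihilator of $U$ under the pairing $v^\T w$), but you characterize it intrinsically rather than through a choice of basis. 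The two arguments also exploit the hypothesis on $g_{|U}$ differently: the paper uses injectivity of $A^\T-\lambda\I$ in a block computation, whereas you use surjectivity of $(g-\lambda\I)_{|U}$ together with the adjunction $\langle gu,w\rangle=\langle u,g^\T w\rangle$ to kill the pairing of $U$ against $V_\lambda(g^\T)$. What your version buys is the elimination of the auxiliary matrix $P$ and of all block bookkeeping, making the invariance of $\overline{U}$ a one-line consequence of adjunction; what the paper's version buys is that it dovetails notationally with how the lemma is applied later (via Corollary \ref{complement}, where a Gram matrix $J$ is used to transport $H^\T$-invariant subspaces back to $H$-invariant ones), keeping everything in explicit matrix form. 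Your closing remark that $V_\lambda(g^\T)\neq 0$ because $\chi_{g^\T}=\chi_g$ is a nice touch confirming non-vacuity, though it is not needed for the statement.
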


\begin{proof}
There exists a nonsingular matrix $P$ such that:
$$P^{-1} H P = \left\{\begin{pmatrix} A_h & B_h \\  0  & C_h \end{pmatrix}
\mid h\in H\right\},\quad
P^\T  H^\T P^{-\T} = \left\{\begin{pmatrix} A_h^\T &  0 \\ B_h^\T  & C_h^\T \end{pmatrix}
\mid h\in H \right\}.$$
Set $A=A_g$, $B=B_g$, $C=C_g$ and $k=\dim(U)$. 
Under our assumption, $A\in \GL_k(\F)$ does not have the eigenvalue $\lambda$. Hence, the same is
true for $A^\T$. So, imposing
$$\begin{pmatrix} A^\T & 0 \\ B^\T & C^\T \end{pmatrix}
\begin{pmatrix} w \\ \overline{w}\end{pmatrix}=
\begin{pmatrix} A^\T w \\ B^\T w + C^\T \overline{w}\end{pmatrix}=
\begin{pmatrix}
\lambda w \\ \lambda\overline{w}
\end{pmatrix},\quad  w \in  \F^k, \; \overline{w} \in \F^{n-k},$$
we get $w = 0$ and
$$V_\lambda(P^\T g^\T P^{-\T}) =\left\{ \begin{pmatrix} 0 \\ \overline{w}\end{pmatrix}\mid C^\T \overline{w}= \lambda \overline{w}\right\}\leq \overline{E}
=\langle e_i\mid k+1\leq i\leq n \rangle.$$
Set $\overline{U} = P^{-\T} \overline{E}$.
Since $\overline{E}$ is invariant under $P^\T H^\T P^{-\T}$,  we get that $\overline{U}$ is 
$H^\T$-invariant.
From $V_\lambda(g^\T ) = P^{-\T} V_\lambda(P^\T g^\T P^{-\T} )$ 
it follows $V_\lambda(g^\T ) \leq \overline{U}$.
\end{proof}

\begin{corollary}\label{complement}
Let $H$ be a subgroup of $\GL_n(\F)$ and $U$ be a proper $H$-invariant subspace. 
Suppose that there exists $J\in \GL_n(\F)$ such that $h^\T J h = J$ for all $h \in H$. 
If  $g\in H$ has the eigenvalue $\lambda\in \F$, then
$$J^{-1}V_\lambda(g^\T)= V_{\lambda^{-1}}(g).$$
Also, if $g_{|U}$ does not have the eigenvalue $\lambda$, then there exists an $H$-invariant subspace $W$, with $\dim(W) = n-\dim (U)$, such that
$V_{\lambda^{-1}}(g) \leq W$.

In particular, for $\lambda=\lambda^{-1}$ (i.e., $\lambda=\pm 1$),  we may assume that  $g_{|U}$ has the eigenvalue $\lambda$.
\end{corollary}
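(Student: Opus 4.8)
The plan is to establish the three assertions in turn, each following from the symmetry relation $h^\T J h = J$ together with Lemma~\ref{complementL}.

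\emph{The eigenspace identity.} First I would rewrite the hypothesis $g^\T J g = J$ as $g^\T = J g^{-1} J^{-1}$, which exhibits $g^\T$ as conjugate (via $J$) to $g^{-1}$. Since $g \in \GL_n(\F)$, the eigenvalue $\lambda$ is nonzero, so $\lambda^{-1}$ makes sense. For $w \in V_\lambda(g^\T)$ I would compute $g^{-1}(J^{-1}w) = J^{-1} g^\T w = \lambda J^{-1}w$, whence $g(J^{-1}w) = \lambda^{-1} J^{-1}w$; this gives $J^{-1}V_\lambda(g^\T) \leq V_{\lambda^{-1}}(g)$. The reverse inclusion follows by running the same computation backwards on $Jv$ for $v \in V_{\lambda^{-1}}(g)$, using $g^\T(Jv) = J g^{-1} v = \lambda\, Jv$. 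As $J^{-1}$ is invertible, this yields the claimed equality $J^{-1}V_\lambda(g^\T) = V_{\lambda^{-1}}(g)$.

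\emph{The invariant complement.} Assuming $g_{|U}$ has no eigenvalue $\lambda$, Lemma~\ref{complementL} supplies an $H^\T$-invariant subspace $\overline U$ with $\dim \overline U = n - \dim U$ and $V_\lambda(g^\T) \leq \overline U$. I would then set $W = J^{-1}\overline U$ and verify that $W$ is $H$-invariant: from $h^\T J h = J$ one obtains $h J^{-1} = J^{-1}(h^{-1})^\T$, so for every $h \in H$, $hW = J^{-1}(h^{-1})^\T \overline U = J^{-1}\overline U = W$, the middle equality using the $H^\T$-invariance of $\overline U$. Finally $\dim W = \dim \overline U = n - \dim U$, and by the first part $V_{\lambda^{-1}}(g) = J^{-1}V_\lambda(g^\T) \leq J^{-1}\overline U = W$, as required.

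\emph{The special case $\lambda = \pm 1$.} Here $\lambda = \lambda^{-1}$, so $V_\lambda(g) = V_{\lambda^{-1}}(g)$, and this space is nonzero because $\lambda$ is an eigenvalue of $g$. If $g_{|U}$ already has eigenvalue $\lambda$ there is nothing to prove; otherwise the second part produces an $H$-invariant subspace $W$ of complementary dimension with $V_\lambda(g) \leq W$, so that $g_{|W}$ has eigenvalue $\lambda$. Replacing $U$ by $W$ then justifies the assertion that one may assume $g_{|U}$ has the eigenvalue $\lambda$.

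I expect the only delicate point to be the passage from $H^\T$-invariance of $\overline U$ to $H$-invariance of $J^{-1}\overline U$: one must manipulate the form relation into the precise conjugation $h J^{-1} = J^{-1}(h^{-1})^\T$ and remember that invariance under a group of invertible maps forces equality, not merely inclusion, of the relevant images. The two eigenspace computations and the dimension bookkeeping are routine once this identity is in hand.
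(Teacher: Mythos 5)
Your proof is correct and follows essentially the same route as the paper's: the same two-sided eigenspace computation from $g^\T J g = J$, and the same construction $W = J^{-1}\overline{U}$ from the subspace $\overline{U}$ supplied by Lemma~\ref{complementL}, with the identical verification of $H$-invariance via $hJ^{-1} = J^{-1}h^{-\T}$. Your explicit justification of the final ``we may assume'' clause is a small addition the paper leaves implicit, but it does not change the argument.
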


\begin{proof}
From $g^\T J g =J$ we get $g(J^{-1} \overline{s}) =J^{-1} g^{-\T} \overline{s}=\lambda^{-1} (J^{-1}\overline{s})$ for 
all $\overline{s}\in V_{\lambda}(g^\T)$. It follows that $J^{-1} V_{\lambda}(g^\T)\leq V_{\lambda^{-1}}(g)$.
On the other hand, take $v \in V_{\lambda^{-1}}(g)$. Then, $g^\T J v = J g^{-1} v =\lambda J v$
gives $Jv \in V_{\lambda}(g^\T)$, whence $V_{\lambda^{-1}}(g)\leq J^{-1} V_{\lambda}(g^\T)$.

If $g_{|U}$ does not have the eigenvalue $\lambda$, we apply Lemma \ref{complementL}: so, there exists an $H^\T$-invariant subspace $\overline{U}$, with $\dim(\overline{U})=n-\dim(U)$, such that $V_\lambda(g^\T) \leq \overline{U}$.
Set $W=J^{-1}\overline{U}$. For any $h \in H$ we have $h W = h (J^{-1}\overline{U})=J^{-1}h^{-\T}\overline{U}
=J^{-1} \overline{U}=W$. Hence, $W$ is $H$-invariant and  $\dim(W)=\dim(\overline{U})=n-\dim(U)$.
Finally, $V_{\lambda^{-1}}(g) =   J^{-1}V_\lambda(g^\T)\leq J^{-1}\overline{U}=W$.
\end{proof}

To prove our Theorem \ref{main}, we define two elements $x,y$ of respective orders $2$ and $3$, where
$y\in \Omega_n^\epsilon(q)$ and $x$ depends on some parameter $a\in \F_q^*$.  Our aim is to find suitable conditions on $a$ such that 
$x\in  \Omega_n^\epsilon(q)$ and the subgroup $H=\langle x,y\rangle$ is not contained in any maximal subgroup $M$ of 
$\Omega_n^\epsilon(q)$.

The maximal subgroups of classical groups,  described in \cite{Ho,KL},  belong to eight classes $\mathcal{C}_1,\mathcal{C}_2,\ldots,
\mathcal{C}_8$, and a further class $\mathcal{S}$.
Note that, for orthogonal groups, the class $\mathcal{C}_8$ is always empty.
Those which concern our results can be roughly described as follows (see \cite[Table 1.2.A]{KL}):
\begin{itemize}
\item groups which are reducible over $\F$ (classes $\mathcal{C}_1$ and $\mathcal{C}_3$);
\item \emph{imprimitive} groups, i.e. stabilizers of decompositions $\F_q^n=\oplus_{i=1}^t W_i$,
where $\dim(W_i)=\frac{n}{t}$ (class $\mathcal{C}_2$). When $t=n$, they are also called \emph{monomial};
\item stabilizers of subfields of $\F_q$ of prime index (class $\mathcal{C}_5$). They are conjugate to subgroups of $\GL_n(q_0)$
where  $q=q_0^r$ with $r$ prime.
\end{itemize}

To understand these groups, it is also necessary to know the representations of classical groups
in higher dimension, where they may fix nondegenerate forms. 
In  particular, we will need (for instance, in Lemma \ref{9-imtens}), the representation  $\psi: \GL_2(q)\to \GL_3(q)$
arising from the action of $\GL_2(q)$  on the space of homogeneous polynomials of degree $2$
in two variables over $\F_{q}$, namely
\begin{equation}\label{Om3}
\psi\left(\begin{pmatrix}
b_1 & b_2 \\
b_3 & b_4
\end{pmatrix}\right) =
\begin{pmatrix}
b_1^2   & b_1b_2        &  b_2^2\\
2b_1b_3 & b_1b_4+b_2b_3 & 2b_2b_4\\
b_3^2   & b_3b_4        & b_4^2
\end{pmatrix}.
\end{equation}
Note that $\im(\psi)$ preserves the symmetric form 
$\begin{pmatrix}
0&0&1\\
0&-1/2 &0\\
1&0&0
\end{pmatrix}$
whenever $b_1b_4 -b_2 b_3=\pm 1$.

Finally, we recall some well known facts  (e.g., see \cite[page 185]{KL}).
Let $\Sym(\ell)$ be the subgroup of $\GL_\ell(\F)$  consisting of the permutation matrices.
Clearly, $\Sym(\ell)$ preserves  the bilinear form defined by $\I_\ell$. Moreover, it
fixes the vector $u=\sum\limits_{i=1}^\ell e_i$ and the subspace $u^\perp$.

If $p\nmid \ell$, then $u$ is not isotropic, whence $\F^\ell = u^\perp  \operp \left\langle u\right\rangle$.
The restriction of $\Sym(\ell)$ to the subspace $u^\perp$ provides
a representation of $\Sym(\ell)$ of degree $\ell -1$.
The Jordan canonical form of any $\sigma\in \Sym(\ell)$ is obtained from the Jordan form of $\sigma_{|u^\perp}$, adding
one block $\begin{pmatrix} 1\end{pmatrix}$. 

If $p\mid \ell$, then $u\in u^\perp$.
Set $\overline{W}=\langle e_1-e_{i+1}\mid  1\leq i \leq \ell-2\rangle$.
With respect to the decomposition $u^\perp=\overline{W}\oplus \langle u\rangle$,
every $\sigma\in \Sym(\ell)$ has matrix
$$\begin{pmatrix}
\sigma_{|\overline W} & 0 \\
v_\sigma^\T & 1
\end{pmatrix},\quad \sigma_{|\overline W}\in \GL_{\ell-2}(p),\quad  v_\sigma\in \F_p^{\ell-2}.$$
The representation $\sigma\mapsto  \sigma_{|\overline W}$  has degree $\ell-2$.
For any $\sigma$ of order not divisible by $p$, its Jordan form is obtained from
that of $\sigma_{|\overline{W}}$, adding one block $\I_2$.

\section{The case $n\in \{9,11, 13, 17\}$}\label{11-17}

In this section we take $J=\diag\left(\I_{n-3}, \begin{pmatrix}
0&0&1\\
0&1&0\\
1&0&0
\end{pmatrix}\right)$
of determinant $-1$.
For any  $a\in \F_q^*$, we define four matrices $x_1,x_2,y_1,y_2 \in \SL_n(q)$ 
with $x_i^2=y_i^3=\I_n$ as follows.
\begin{itemize}
  \item[($x_1$)]  $x_1$ acts on $\C=\{e_1,\ldots,e_n\}$ as 
\begin{itemize}
\item[$\bullet$] the identity if $n=9$;
\item[$\bullet$] the permutation $(e_1,e_3)(e_2,e_4)$ if $n=11$;
\item[$\bullet$] the permutation $(e_1,e_2)(e_4,e_5)$ if $n=13$;
\item[$\bullet$] the permutation $(e_1,e_3)(e_2,e_4)(e_5,e_6)(e_8,e_9)$ if $n=17$.
\end{itemize}
\item[($x_2$)] $x_2=\diag(\I_{n-9},\bar x)$, where $\bar x=\bar x(a)$ is as in Figure \ref{bar}. 
\item[($y_1$)]  $y_1$ acts on $\C$ as
\begin{itemize}
 \item[$\bullet$] the identity if $n\in \{9,11\}$;
 \item[$\bullet$] the permutation $(e_2,e_3,e_4)$ if $n=13$;
 \item[$\bullet$] the permutation $(e_3,e_4,e_5)(e_6,e_7,e_8)$ if $n=17$.
 \end{itemize}
\item[($y_2)$] $y_2=\diag(\I_{n-9},\bar y)$, where $\bar y$ is as in Figure \ref{bar}. 
\end{itemize}

\begin{figure}[ht]
$$\bar x=\begin{pmatrix}
 1 & 0 & 0 & 0 & 0 & 0 & 0 & 0 & 0 \\
 0 & 1 & 0 & 0 & 0 & 0 & 0 & 0 & 0 \\
 0 & 0 & 0 & 1 & 0 & 0 & 0 & 0 & 0 \\
 0 & 0 & 1 & 0 & 0 & 0 & 0 & 0 & 0 \\
 0 & 0 & 0 & 0 & 0 & 1 & 0 & 0 & 0 \\
 0 & 0 & 0 & 0 & 1 & 0 & 0 & 0 & 0 \\
 0 & 0 & 0 & 0 & 0 & 0 & 0 & 0 & \frac{2}{a}\\
 0 & 0 & 0 & 0 & 0 & 0 & 0 &-1 & 0 \\
 0 & 0 & 0 & 0 & 0 & 0 & \frac{a}{2} & 0 & 0
\end{pmatrix},\;\; \bar y=\begin{pmatrix}
  0&   0&   1&   0&   0&   0&   0&   0&   0\\
  1&   0&   0&   0&   0&   0&   0&   0&   0\\
  0&   1&   0&   0&   0&   0&   0&   0&   0\\
  0&   0&   0&   0&   0&   0&   0&   1&   0\\
  0&   0&   0&   1&   0&   0&   0&   0&   0\\
  0&   0&   0&   0&   0&   -1&   1&   0&   0\\
  0&   0&   0&   0&   0&   -2&   1&   0&   -2\\
  0&   0&   0&   0&   1&   0&   0&   0&   0\\
  0&   0&   0&   0&   0&   0&   -\frac{1}{2}&   0&   0
\end{pmatrix}.$$
\caption{Generators of $\Omega_9(q)$.}\label{bar}
\end{figure}

We can see $x_2$ as the product of an even number of transpositions and $\diag(\I_{n-3}, x_3)$ with
$x_3 =\begin{pmatrix} 0&  0 & \frac{2}{a} \\
0 & -1 & 0 \\ \frac{a}{2}  & 0 &  0
\end{pmatrix}$. 
Identifying $\Sym(n-3)$ with the group of permutation matrices fixing $\{e_j: 1\leq j\leq n-3\}$
and acting as the identity on $\langle e_{n-2},e_{n-1},e_n\rangle$, the first factor of $x_2$
viewed in $\Sym(n-3)\times \GL_3(q)$ is in $\Alt(n - 3) \leq \Omega_n(q)$.
In particular, it is an involution and the same applies to $x_3$.
Similarly, also $x_1$ is the product of an even number of transpositions, so is in  $\Alt(n - 3) \leq \Omega_n(q)$.
Moreover, $x_3\in \Omega_3(q)$
if and only if $-a\in (\F_q^\ast)^2$.  Indeed, $x_3$ is the product of the reflections with centers 
$\langle a e_{n-2}-2e_n\rangle$ and $\langle e_{n-1}\rangle$, whose spinor norms are, respectively, 
$-2a(\F_q^*)^2$ and $\frac{1}{2}(\F_q^*)^2$. 

Clearly, $y_1$ and $y_2$ have determinant $1$. 
Moreover, $y_1 \in \Alt(n - 9)\leq \Omega_n(q)$ and $y_2^\T J y_2=J$. Since 
$x_1x_2=x_2x_1$ and $y_1 y_2 = y_2y_1$, we conclude that $x:= x_1x_2$ and 
$y:= y_1y_2$ have respective orders $2$ and $3$ and
$$H := \langle x,y\rangle \leq \Omega_n(q)\quad \textrm{when } -a \in (\F_q^*)^2.$$
We will also assume that $a\in \F_q^*$ is such that  $\F_p[a]=\F_q$.

By direct computation, we see that the characteristic polynomial of $xy$ is
$$\chi_{xy}(t)= (t + a)(t + a^{-1})(t^{n-2}-1)=t^n + (a + a^{-1})t^{n-1}+t^{n-2}
-t^2-(a+a^{-1})t-1.$$
In particular, $\tr(xy) = -(a + a^{-1})$. Moreover, the minimal polynomial of $xy$ is
$$\textrm{min}_{xy} (t)=
\left\{\begin{array}{cl} 
(t + 1)(t^{n-2}-1)  & \textrm{if } a=1,\\
(t + a)(t + a^{-1})(t^{n-2}-1) & \textrm{otherwise}.
\end{array}       \right.$$
If $a\neq 1$, the minimal polynomial of $xy$ coincides with its characteristic polynomial.
Hence, consideration of the canonical rational form of $xy$ when $a\neq 1$ and direct computation when $a=1$
tell us that $(xy)^{n-2}\neq \I_n$ has a fixed point space of dimension $n-2$, namely it is a \emph{bireflection}.

\begin{lemma}\label{transitivity}
For $1\leq j,k\leq n-3$, there exists $h\in H$ such that $h e_j=e_k$.
\end{lemma}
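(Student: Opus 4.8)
The plan is to prove that $H$ acts transitively on the set $\{e_1,\dots,e_{n-3}\}$ by building, for each consecutive pair of these vectors, a generator (or its inverse) that carries one of them \emph{exactly} onto the other, and then composing along a connecting path. First I would read off the action of $x=x_1x_2$ and $y=y_1y_2$ on the indices $1,\dots,n-3$ directly from the permutations $x_1,y_1$ and from the blocks $\bar x,\bar y$ of Figure~\ref{bar}. Writing the last nine coordinates as $e_{n-8},\dots,e_n$, one checks that on $e_1,\dots,e_{n-3}$ the involution $x$ acts as a genuine permutation of basis vectors, while $y$ does too except that it sends $e_{n-4}$ to $e_{n-1}$ (inside the form block) and $e_{n-3}$ to $-e_{n-3}-2e_{n-2}$; both of these images leave $\langle e_1,\dots,e_{n-3}\rangle$ and will simply not be used.

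The key observation is that the last block always produces the uniform chain
$$e_{n-8}\xrightarrow{\,y\,}e_{n-7}\xrightarrow{\,y\,}e_{n-6}\xrightarrow{\,x\,}e_{n-5}\xrightarrow{\,y\,}e_{n-4}\xrightarrow{\,x\,}e_{n-3},$$
coming from the $3$-cycle on the first three local indices of $\bar y$ together with the transpositions of the local pairs $(3,4)$ and $(5,6)$ of $\bar x$. It then remains to hook the coordinates $e_1,\dots,e_{n-9}$, on which only $x_1$ and $y_1$ act, onto the head $e_{n-8}$ of this chain. Here the four values $n\in\{9,11,13,17\}$ are handled separately: for $n=9$ the chain already exhausts $e_1,\dots,e_6$; for $n=11$ the transpositions $(e_1,e_3)$ and $(e_2,e_4)$ of $x_1$ attach $e_1,e_2$ to the chain; for $n=13$ the factor $(e_1,e_2)$ of $x_1$ and the cycle $(e_2,e_3,e_4)$ of $y_1$ link $e_1,\dots,e_4$, which the transposition $(e_4,e_5)$ of $x_1$ bridges to the block; and for $n=17$ the cycles of $x_1$ and $y_1$ connect $e_1,\dots,e_8$, which $(e_8,e_9)$ of $x_1$ bridges to the block.

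In each case the graph on $\{1,\dots,n-3\}$ whose edges are these basis-vector-to-basis-vector moves is connected. Since $x^2=\I_n$ and $y^3=\I_n$, every edge is reversible inside $H$ (using $x$, respectively $y^2$), so reading the generators off a path joining $j$ to $k$ yields an element $h\in H$ with $he_j=e_k$, proving the lemma.

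The point needing care — and the only real source of error — is to keep every move clear of the three form-block coordinates $e_{n-2},e_{n-1},e_n$, where $\bar x$ introduces the scalars $\tfrac a2,\tfrac 2a,-1$ and $\bar y$ the non-monomial images. Stopping the block chain precisely at $e_{n-3}$, and never applying $y$ to $e_{n-4}$ or $e_{n-3}$, guarantees that no scalar is ever introduced, so that the composite of generators along each path maps one basis vector onto another with coefficient $1$; the argument is therefore a pure orbit computation, the bookkeeping across the four cases being the main (if routine) obstacle.
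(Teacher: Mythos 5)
Your proposal is correct and is essentially the paper's own argument: the paper likewise treats the action of $x$ and $y$ as exact basis-vector moves on $e_1,\ldots,e_{n-3}$, verifies a short initial path case by case (e.g.\ $xe_1=e_3$, $ye_3=e_4$, $ye_4=e_5$, $xe_4=e_2$ for $n\in\{11,17\}$), and then climbs the same chain $e_{n-8}\to e_{n-7}\to\cdots\to e_{n-3}$ inside the $9\times 9$ block, phrased there as an induction on the largest $\ell$ with $e_1,\ldots,e_\ell$ in the orbit of $e_1$ rather than as graph connectivity. Your reversibility remark (via $x=x^{-1}$, $y^{-1}=y^2$) and the care to avoid the coordinates $e_{n-2},e_{n-1},e_n$ match the paper's implicit bookkeeping, so the two proofs differ only in presentation.
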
 

\begin{proof}
Clearly, it is enough to show that, for $k\leq n-3$, there exists $h\in H$
such that $he_1=e_k$.  Noting  that $ye_1=e_2$, $ye_2=e_3$, $xe_3=e_4$, $ye_4=e_5$ for $n=9$, 
$xe_1=e_3$, $ye_3=e_4$, $ye_4=e_5$, $xe_4=e_2$ for $n\in\{11,17\}$, and
$xe_1=e_2$, $ye_2=e_3$, $ye_3=e_4$  $xe_4=e_5$ for $n=13$, our claim is true for $k\leq 5$.

Now, let $5 \leq \ell \leq n - 3$ be the largest integer for which, for all $1\leq i\leq \ell$, there exists $h_i \in H$ 
such that $h_i e_1= e_i$.
If $\ell < n - 3$, there exists $h \in \{x, y\}$ such that $he_\ell = e_{\ell+1}$, a contradiction. 
\end{proof}

\begin{lemma}\label{11-irr}
Assume 
$a^2-a-1\neq 0$ if $n=9$, 
$(a-1)(a^3+2a^2+a+1)\neq 0$ if $n=11$, and
$a^4 + a^2 - a + 1\neq 0$ if $n=17$.
Then, the group $H$ is absolutely irreducible. 
\end{lemma}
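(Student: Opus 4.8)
The plan is to show that $H$ acts absolutely irreducibly on $V=\F^n$ by producing an $H$-invariant subspace analysis that leaves no room for a proper nonzero invariant subspace, even after extending scalars to $\F$. The natural tool is the pair of eigenvalue/eigenspace results of Lemma~\ref{complementL} and Corollary~\ref{complement}, combined with the explicit transitivity of Lemma~\ref{transitivity} on the basis vectors $e_1,\dots,e_{n-3}$. First I would suppose, for contradiction, that $W$ is a proper nonzero $H$-invariant subspace of $V\otimes_{\F_q}\F$, and exploit the special element $xy$ whose characteristic polynomial $\chi_{xy}(t)=(t+a)(t+a^{-1})(t^{n-2}-1)$ has been computed above. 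The hypotheses on $a$ in each of the cases $n\in\{9,11,17\}$ are exactly the arithmetic conditions guaranteeing that the roots of $t^{n-2}-1$ stay separated from $-a$ and $-a^{-1}$ and that $xy$ has a controlled eigenvalue structure, so I would first verify that $xy$ is (up to the stated exceptions) a regular-enough element to pin down how $W$ meets its eigenspaces.

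Second, I would use the element $(xy)^{n-2}$, which the excerpt shows to be a nontrivial bireflection, to force $W$ to contain, or be contained in, a very low-codimensional or very low-dimensional subspace: a bireflection has an $(n-2)$-dimensional fixed space and a $2$-dimensional complement, and an $H$-invariant $W$ must respect the decomposition induced on it. Concretely, if $W$ is invariant under $(xy)^{n-2}$ then either $W$ lies in the fixed hyperplane-type subspace or $W$ contains the non-fixed part; pairing this with the eigenvalue $\lambda=-a^{n-2}$ information from the minimal polynomial lets me apply Corollary~\ref{complement} (with $\lambda=\lambda^{-1}$ in the reflection cases, or directly for generic $\lambda$) to locate an $H$-invariant subspace of the complementary dimension. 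The interplay between these two invariant subspaces should squeeze $\dim W$ into an impossible range unless $W=0$ or $W=V$.

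Third, once the eigenvalue bookkeeping has restricted the possibilities, I would invoke Lemma~\ref{transitivity}: since $H$ acts transitively on $\{e_1,\dots,e_{n-3}\}$, any nonzero $H$-invariant subspace $W$ whose intersection with $\langle e_1,\dots,e_{n-3}\rangle$ is nonzero must, by transitivity and invariance, contain the full span $\langle e_1,\dots,e_{n-3}\rangle$ or at least a large transitive orbit-sum. I would then track how $x_2$ and $y_2$ (which act nontrivially only on the last nine coordinates, via $\bar x$ and $\bar y$) move vectors between the first $n-9$ coordinates and the final block, thereby propagating membership in $W$ from the permutation part into the $\bar x,\bar y$ block and forcing $W=V$. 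The reflection-type eigenvalues $\pm1$ handled by the final clause of Corollary~\ref{complement} are what let me assume, without loss, that the restriction $g_{|U}$ realizes the relevant eigenvalue, closing the gaps where a naive dimension count would otherwise stall.

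The main obstacle I expect is the final block: transitivity only controls the first $n-3$ basis vectors, so the hard part is showing that an invariant subspace cannot be supported entirely on (or entirely avoid) the last few coordinates where $\bar x$ and $\bar y$ act in a genuinely non-permutation way. This is precisely where the nondegeneracy hypotheses $a^2-a-1\neq0$, $(a-1)(a^3+2a^2+a+1)\neq0$, and $a^4+a^2-a+1\neq0$ must be used: each forbids a coincidence of eigenvalues that would otherwise allow a proper invariant subspace to split off inside the $\bar x,\bar y$ block, and I anticipate these conditions arise naturally as the resultant/discriminant-type obstructions when one computes the action of $xy$ restricted to a hypothetical small invariant subspace. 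Verifying absolute irreducibility (as opposed to mere irreducibility over $\F_q$) requires checking that no eigenvalue coincidence survives base change to $\F$, which the explicit factorization of $\chi_{xy}$ over $\F$ makes tractable.
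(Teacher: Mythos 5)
Your outline has a genuine gap at its center: it never produces the one ingredient the paper's argument actually needs, namely an element of $H$ whose $1$-eigenspace is one-dimensional and spanned by a \emph{standard basis vector} $e_j$ with $j\leq n-3$. The paper's proof takes $g_9=[x,y]$, $g_{11}=(xy^2)^3xy$, $g_{13}=(xy^2)^2xy$, $g_{17}=(xy^2)^6xy$ and checks that, under the stated hypotheses on $a$, one has $V_1(g_n)=\langle e_1\rangle, \langle e_3\rangle, \langle e_2\rangle, \langle e_6\rangle$ respectively; Corollary~\ref{complement} (the $\lambda=\pm 1$ clause) then allows one to assume $(g_n)_{|U}$ has the eigenvalue $1$, which forces that basis vector into $U$; Lemma~\ref{transitivity} puts all of $e_1,\dots,e_{n-3}$ into $U$; and the relations $ye_{n-3}+e_{n-3}=-2e_{n-2}$, $y^2e_{n-5}=e_{n-1}$, $y^2e_{n-2}=-\frac{1}{2}e_n$ give $U=V$. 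Your plan substitutes for this the elements $xy$ and $(xy)^{n-2}$, but neither can play that role: the $1$-eigenvector of $xy$ is not a standard basis vector, and the bireflection $(xy)^{n-2}$ has an $(n-2)$-dimensional fixed space, so applying Corollary~\ref{complement} to it localizes nothing. Moreover, your claimed dichotomy --- that a subspace invariant under the bireflection either lies in its fixed space or contains its non-fixed part --- is false: an invariant subspace of a semisimple element is the direct sum of its intersections with the eigenspaces, so it can meet the fixed space in any dimension while containing one, both, or neither of the two non-fixed eigenlines.

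Two further errors would block any repair along your lines. First, you misread the hypotheses on $a$: they are not conditions separating the roots of $t^{n-2}-1$ from $-a$ and $-a^{-1}$ (for $n=9$ such a condition would read $a^7\neq -1$, not $a^2-a-1\neq 0$); they are exactly the conditions making $V_1(g_n)$ one-dimensional for the specific words $g_n$ above, so without exhibiting those words you cannot even see where the hypotheses enter, let alone use them. Second, your appeal to Lemma~\ref{transitivity} is invalid as stated: transitivity on $\{e_1,\dots,e_{n-3}\}$ lets you move a standard basis vector \emph{already known to lie in $W$} onto all the others, but from $W\cap\langle e_1,\dots,e_{n-3}\rangle\neq\{0\}$ alone you cannot conclude that $W$ contains any $e_i$ (a sum such as $e_1+\dots+e_{n-3}$ can be permuted around without any single $e_i$ ever appearing in the span of its images). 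The whole difficulty of the lemma is precisely to get one genuine basis vector inside $U$, and that step is absent from your proposal.
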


\begin{proof} 
Assume, for a contradiction, that $U$ is a proper $H$-invariant subspace.
Define  
$$g_9=[x,y],\quad g_{11}=(xy^2)^3xy,\quad g_{13}= (xy^2)^2xy, \quad
g_{17}= (xy^2)^6xy.$$
Under our hypotheses on $a$, for $n=9$  we have $V_1(g_9)= \langle e_1\rangle$. By Corollary \ref{complement} we may assume $e_1\in U$
and hence  $e_1,\ldots,e_{n-3}\in U$ by Lemma \ref{transitivity}.
Similarly, for $n=11$ we have $V_1(g_{11})= \langle e_3\rangle$,
for $n=13$ we have $V_1(g_{13})= \langle e_2\rangle$, and for $n=17$ we have $V_1(g_{17})= \langle e_6\rangle$.
In all these cases, as above, we may assume $e_1,\ldots,e_{n-3}\in U$.
Noting that $y e_{n-3}+e_{n-3} = -2 e_{n-2}$, $y^2 e_{n-5} = e_{n-1}$ and $y^2 e_{n-2} = -\frac{1}{2} e_n$,
we get the contradiction $U=V$.
\end{proof}

For the following result we need the traces of $[x,y]^j$, $j=1,2$:
$$\tr([x,y])= 1+ a^2+ a^{-2}+\varsigma_n \equad \tr([x,y]^2)= (1+a^2+a^{-2})^2 -4a-\kappa_n,$$
where 
$$\varsigma_n=\left\{
\begin{array}{ll}
 1 & \textrm{if } n=9,\\
 0 & \textrm{otherwise}
\end{array}\right.
\equad 
\kappa_n=\left\{
\begin{array}{ll}
 3 & \textrm{if } n=9,\\
 2 & \textrm{if } n=11,\\
 4 &  \textrm{if } n=13,17.
\end{array}\right. $$

\begin{lemma}\label{11-C5}
The group $H$ is not contained in any maximal subgroup $M$ in class $\mathcal{C}_5$  of $\Omega_{n}(q)$.
\end{lemma}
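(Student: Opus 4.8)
The plan is to exploit the fact that the members of class $\mathcal{C}_5$ are precisely the subfield subgroups. If $H$ were contained in such a subgroup $M$, then $H$ could be realised, up to conjugacy in $\GL_n(\F)$, over a proper subfield $\F_{q_0}$ of $\F_q$ (with $q=q_0^r$, $r$ prime). Since $H$ is absolutely irreducible by Lemma \ref{11-irr}, and over a finite field the Schur index is trivial, its field of definition coincides with its trace field; hence a necessary consequence of $H\leq M$ is that $\tr(h)\in \F_{q_0}$ for every $h\in H$. (In odd dimension the centre of $\Omega_n(q)$ is trivial, so no scalar or Frobenius twist obstructs this trace criterion.) Thus it suffices to produce finitely many elements of $H$ whose traces together force $\F_p[a]\subseteq \F_{q_0}$.

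The key point is the choice of which traces to use. The element $xy$ yields only $\tr(xy)=-(a+a^{-1})\in\F_{q_0}$, and $[x,y]$ yields $a^2+a^{-2}\in \F_{q_0}$ after removing the constant $\varsigma_n\in\F_p$; but since $a^2+a^{-2}=(a+a^{-1})^2-2$, this is redundant, and $a+a^{-1}\in\F_{q_0}$ pins $a$ down only up to the index-$2$ ambiguity $a\leftrightarrow a^{-1}$. The decisive ingredient is the term linear in $a$ appearing in the trace of the square of the commutator: from $\tr([x,y]^2)=(1+a^2+a^{-2})^2-4a-\kappa_n$, together with $1+a^2+a^{-2}\in\F_{q_0}$ and $\kappa_n\in\F_p\subseteq\F_{q_0}$, one isolates
$$-4a=\tr([x,y]^2)-(1+a^2+a^{-2})^2+\kappa_n\in\F_{q_0}.$$
Because $p>2$, the scalar $4$ is invertible in $\F_p$, so we conclude $a\in\F_{q_0}$.

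Finally I would close the argument: by the standing hypothesis $\F_p[a]=\F_q$, the relation $a\in\F_{q_0}$ forces $\F_q=\F_p[a]\subseteq\F_{q_0}\subseteq\F_q$, whence $\F_{q_0}=\F_q$, contradicting that $M$ is a proper subfield subgroup. I expect the main obstacle to be essentially bookkeeping: verifying the stated trace formulas for $[x,y]$ and $[x,y]^2$ by a (somewhat tedious) matrix computation, and confirming the precise form of the trace-field criterion for the subfield subgroups in each of the cases $n\in\{9,11,13,17\}$. The genuine idea, by contrast, is the recognition that $\tr(xy)$ is too coarse and that the commutator square supplies exactly the term linear in $a$ needed to recover the full field.
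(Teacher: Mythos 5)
Your proposal is correct and takes essentially the same route as the paper's proof: both combine $\tr([x,y])$ and $\tr([x,y]^2)$ to isolate the term linear in $a$ (the paper writes $4a+\kappa_n = (\tr([x,y])-\varsigma_n)^2-\tr([x,y]^2)\in \F_{q_0}$), conclude $a\in\F_{q_0}$ since $p>2$, and derive the contradiction $q_0=q$ from $\F_p[a]=\F_q$. The only inessential difference is your appeal to Schur indices and trace fields, which is not needed: once $x,y$ are conjugated into $\GL_n(q_0)$, the containment of all traces in $\F_{q_0}$ is immediate from conjugation-invariance of the trace.
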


\begin{proof}
Suppose the contrary. By \cite[Tables 8.58 and 8.74]{Ho} and \cite[Proposition 4.5.8]{KL} 
we have either $M\cong \Omega_n(q_0)$ where $q=q_0^r$ and $r$ is an odd prime, or
$M\cong \SO_n(q_0)$ where $q=q_0^2$.
Thus, there exists $g\in \GL_{n}(\F)$ such that
$x^g=x_0$, $y^g=y_0$, with $x_0,y_0\in \GL_{n}(q_0)$.
From  $\tr\left([x,y]^j\right)=\tr\left([x^g, y^g]^j \right)= \tr\left( [x_0,y_0]^j\right)$, $j=1,2$,
it follows that $4a+\kappa_n = (\tr([x,y])-\varsigma_n)^2-\tr([x,y]^2)\in \F_{q_0}$,
whence $a  \in \F_{q_0}$. So,  $\F_q=\F_p[a]\leq \F_{q_0}$ implies $q_0=q$.
\end{proof}

\begin{lemma}\label{mono}
Assume $a^2-a-1\neq 0$  for $n=9$.
If $H$ is absolutely irreducible, then $H$ is not contained in any monomial subgroup of $\Omega_{n}(q)$.
\end{lemma}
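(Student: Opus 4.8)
The plan is to show that a monomial structure would force a combinatorial/algebraic constraint on the eigenvalue data of a suitable element of $H$ that cannot be met, given that $H$ acts absolutely irreducibly. Recall that a monomial subgroup preserves a decomposition $V = V_1 \oplus \cdots \oplus V_m$ (with $m \geq 2$, since irreducibility forbids $m=1$) into subspaces of equal dimension $d = n/m$ that $H$ permutes transitively. The key numerical obstruction comes from the element $xy$, whose characteristic polynomial $\chi_{xy}(t) = (t+a)(t+a^{-1})(t^{n-2}-1)$ is given explicitly in the text. Because the block permutation induced by any $h \in H$ is a permutation of the $m$ blocks, the spectrum of each $h$ is organized by cycle type, and in particular the eigenvalues occurring with each multiplicity are severely constrained.

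First I would extract the arithmetic of the spectrum of $xy$. The factor $t^{n-2}-1$ contributes all $(n-2)$-th roots of unity, each as a simple eigenvalue, together with the two (generically distinct) eigenvalues $-a$ and $-a^{-1}$. The crucial point is the eigenvalue $1$: it appears with multiplicity exactly one (from $t^{n-2}-1$), unless $-a$ or $-a^{-1}$ equals $1$, which the genericity of $a$ (namely $\F_p[a]=\F_q$ with $q$ large) excludes. So I would record that $xy$ has $1$ as a simple eigenvalue and, more importantly, that its eigenvalues are all distinct apart from at most the coincidences among roots of unity forced by $n-2$.

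The main step is to contradict transitivity of the block action via eigenvalue multiplicities. If $xy$ (or a chosen power or conjugate) normalizes the block decomposition and acts on the $m$ blocks by a permutation $\pi$, then the spectrum of $xy$ on $V$ is a disjoint union over the cycles of $\pi$: a cycle of length $\ell$ acting on a block of dimension $d$ contributes eigenvalues that are $\ell$-th roots of a common scalar, hence appear in full Galois-symmetric families whose sizes are multiples determined by $\ell$ and $d$. Comparing this rigid multiplicity pattern with the actual factorization of $\chi_{xy}$—in which the root of unity $1$ is simple while many other primitive roots of unity occur—forces $\pi$ to fix at least one block, and an induction on the block structure then pins down $m=1$, contradicting $m \geq 2$. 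Concretely, I would argue that a simple eigenvalue such as $1$ cannot be distributed across a nontrivial cycle (which would produce it with multiplicity a multiple of the cycle length), so the block containing the $1$-eigenvector is $xy$-stable, and then the restriction of $H$ to that block would have to be all of $H$ by irreducibility, collapsing $m$ to $1$.

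The hard part will be handling the low-dimensional blocks and the possible coincidences among roots of unity when $m \mid n-2$, since then several eigenvalues of $xy$ \emph{can} legitimately be grouped into cycles of the block permutation; here the genericity hypothesis on $a$ and the explicit presence of the two ``extra'' eigenvalues $-a, -a^{-1}$ (which are transcendental-looking relative to the roots of unity, i.e.\ lie outside $\F_p$ for generic $a$) must be used to break the symmetry and rule out a consistent assignment. I expect the cleanest route is to focus on a single element of $H$ whose eigenvalue multiset is provably incompatible with \emph{any} transitive block permutation of $m \geq 2$ blocks, using the simple eigenvalue $1$ together with the isolated pair $-a^{\pm 1}$, thereby reducing the whole argument to one spectral computation rather than a case analysis over divisors of $n$.
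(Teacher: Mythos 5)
There is a genuine gap: the combinatorial claim your whole plan rests on is false. In the relevant situation, a monomial subgroup of $\Omega_n(q)$ stabilizes an orthogonal decomposition into $n$ one-dimensional blocks spanned by an orthonormal basis (this is what ``monomial'' means here, and such maximal subgroups occur only for $q=p$, which is why the paper's proof begins by assuming $q=p$); every element is then a signed permutation matrix, and a cycle of length $\ell$ whose product of signs is $c=\pm 1$ contributes the factor $t^\ell-c$ to the characteristic polynomial, i.e.\ each $\ell$-th root of $c$ occurs with multiplicity one (when $p\nmid\ell$). Consequently, a simple eigenvalue $1$ does \emph{not} force a fixed block: an $\ell$-cycle with $c=1$ has $1$ as a simple eigenvalue whose eigenvector is supported on all $\ell$ blocks of the cycle. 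Your parenthetical claim that a nontrivial cycle ``would produce it with multiplicity a multiple of the cycle length'' is exactly backwards, and with it both the step ``the block containing the $1$-eigenvector is $xy$-stable'' and the ensuing collapse to $m=1$ (itself a non sequitur: individual elements of an irreducible monomial group fix blocks all the time) disappear.

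Worse, no argument using only the spectrum of $xy$ can work. Since monomiality forces $q=p$, the ``genericity'' of $a$ buys nothing: $a\in\F_p^*$, and the paper's Theorem \ref{th-1137} actually chooses $a=-1$ when $q=p$. For $a=-1$ one has $\chi_{xy}(t)=(t-1)^2(t^{n-2}-1)$, so the eigenvalue $1$ is not even simple (it has multiplicity $3$), and this polynomial is precisely of the form $\prod_i\left(t^{\ell_i}-\epsilon_i\right)$ with $\epsilon_i=\pm 1$, realized by a signed permutation matrix with two fixed blocks and one $(n-2)$-cycle. This is why the paper's proof must use more than eigenvalue multiplicities: it localizes the $1$-eigenspace of better-chosen elements (for $n=9$, $V_1([x,y])=\langle e_1\rangle$, which is exactly where the hypothesis $a^2-a-1\neq 0$ enters --- a hypothesis your argument never uses), forces some basis vector $v_j$ to equal $\pm e_1$ via $\tr(h)\neq 0$ and $\chi_h(-1)\neq 0$, propagates $e_1,\ldots,e_{n-3}\in\mathcal{B}$ via the transitivity Lemma \ref{transitivity}, and then contradicts orthogonality of $\mathcal{B}$ through the Gram matrix ($v_{n-3}^\T J v_{n-2}=-1\neq 0$), with the leftover values of $a$ eliminated by element-order computations. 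Incidentally, you also conflate monomiality with general imprimitivity (blocks of dimension $d=n/m$): the imprimitive case for $n=9$ is handled by a separate lemma in the paper, and the present lemma concerns one-dimensional blocks only.
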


\begin{proof}
For the sake of contradiction, suppose that $H$ is contained in a monomial subgroup
$M\in \mathcal{C}_2$ of $\Omega_n(q)$. In this case, we may assume $q=p$ and  $H$ acts  monomially with respect to an
orthonormal basis $\mathcal{B}=\{v_1,v_2,\ldots,v_n\}$, see \cite[Proposition 4.2.15]{KL}.
Moreover, by \cite[Tables 8.58 and 8.74]{Ho} and \cite[Proposition 4.5.8]{KL}, the order of 
$M$  divides $2^{n-1}|\Sym(n)|$. In particular, any prime divisor $\varrho$ of $|H|$ should satisfy $\varrho \leq n$.
If we can show that $e_1\in \mathcal{B}$, we easily get a contradiction. Indeed, from $e_1 \in \mathcal{B}$
it follows $e_i \in \mathcal{B}$ for all $1\leq i \leq n-3$ (see Lemma \ref{transitivity}).
Hence, we may assume $v_i=e_i$ for  $1\leq i\leq n-3$. In particular, $e_{n-3}\in \mathcal{B}$.
As $ye_{n-3}= -2 e_{n-2}-e_{n-3}\not \in \langle e_i \mid 1\leq i \leq n-3\rangle$,
$ye_{n-3}$ should be orthogonal to $v_{n-3}$ obtaining the contradiction 
$v_{n-3}^\T  J y e_{n-3} =e_{n-3}^\T  J y e_{n-3}=  -1\neq 0$.

So, we now show that $e_1\in \mathcal{B}$. To this purpose note that, if $\tr (h)\neq 0$,
then $h$ must fix at least one $\langle v_j\rangle$. 
Moreover, given $h\in H$ of order $k$,
$h\langle v_j\rangle=\langle v_j\rangle$  implies $h v_j=\lambda v_j$, with $\lambda=\pm 1$.
So, consider the permutation $\zeta$ induced by $h$ on the $\langle v_i\rangle$'s.
If $\zeta^b$ acts as the identity on $\{\langle v_1\rangle,\langle v_2\rangle, \ldots, \langle v_n\rangle\}$ for some $b\geq 1$,
then $h^b v_i = \pm v_i$ for every $i$. It follows that $\zeta$ has order $k$ or 
$\frac{k}{2}$.
In particular, if $h$ has odd order, it permutes $\mathcal{B}$ and its cycle structure is determined
by its rational canonical form.
Also, if $h\in H$ does not have the eigenvalue $-1$, from $h\langle v_j\rangle=\langle v_j\rangle$
we get $hv_j=v_j$.  Clearly, this applies to $h=y$.
Since $y$ has order $3$, setting $r=0$ if $n=9$, $r=1$ if $n=13$, and $r=2$ if $n \in \{11,17\}$, 
$y$ fixes $v_j$ for $1\leq j\leq r$ and
permutes the remaining vectors $v_j$ in $\frac{n-r}{3}$ orbits of length $3$.

\noindent \textbf{Case $n=11,13,17$.} 
Call $s$ the number of vectors $u_j=e_j+ y e_j+y^2 e_j$, with $ye_j\neq e_j$, fixed by $y$.
Then, any $v_1 \in V_1(y)$ can be written as
$$v_1=\sum\limits_{i=1}^r \alpha_i e_i+ \sum\limits_{j=1}^s \beta_j u_j.$$
Substituting $e_i$ by $\lambda_i e_i$ and $u_j$ by $\mu_j u_j$ if necessary, 
we may assume that all
the coefficients $\alpha_i,\beta_j$ are in $\{0,1\}$.
Since $y$ fixes $v_1$, by the transitivity of $H$ on the subspaces generated by 
the vectors of $\mathcal{B}$, due to its irreducibility, we may also assume
$v_3=x v_1$, $v_4=yv_3$, $v_5 = y v_4$ and $v_6= x v_5$.
Imposing $v_1^\T Jv_3=v_j^\T J v_6=0$ for all  $j\in \{1,4,5\}$,
we get $v_1\in \{e_1,\ldots,e_r\}$, unless $n \in \{11,17\}$, $q=3$ and $a=-1$.
In these exceptional cases, by direct computation, the order of  $(xy)^2 x y^2$ is divisible by a prime
$\varrho\geq 41$, a contradiction as $\varrho$ does not divide $|\Sym(n)|$, $n\leq 17$ (see the beginning
of the proof).

\noindent \textbf{Case $n=9$.}
Take $h=[x,y]$ and suppose $a^2-a-1\neq 0$. 
Then $V_1([x,y])=\langle e_1\rangle$.
We have $$\tr (h)=\frac{(a^2+1)^2}{a^2} \equad \chi_h(-1)= \frac{-8(a^2+a+1)^2}{a^2}.$$
It follows $e_1\in \mathcal{B}$ unless, possibly, when $a^2+1=0$ or $a^2+a+1=0$.
As previously remarked, the order of any element of $M$, hence a fortiori of $H$,
if odd  must belong to the set $\{ 1, 3,  5, 7,  9, 15 \}$, if prime must belong to  $\{2,3,5,7\}$.
Assume $a^2+1=0$. If $p\neq 5$, we may take $h=[x,y]^2$, as $V_1(h)=\langle e_1\rangle$ and 
$\tr(h)=-4a - 2\neq 0$. If $q=5$, then $a=2$ and $[x,y]$ has order $156=2^2\cdot 3 \cdot 13$, a contradiction. 
So, assume $a^2+a+1=0$. If $p\neq 3$, the permutation induced by $xy$ on the $\langle v_i\rangle$'s
has order divisible by $21$, a contradiction. 
If $q=3$, then $a=1$ and $[x,y]^3y$ has order $41$, a contradiction.
\end{proof}

\begin{lemma}\label{9-imtens}
Assume $n=9$. If the group $H$ is absolutely irreducible, 
then it is neither contained in a maximal subgroup in class $\mathcal{C}_2$ of $\Omega_9(q)$
nor contained in any maximal subgroup in class $\mathcal{C}_7$.
\end{lemma}

\begin{proof}
For the sake of contradiction, suppose that $H$ is imprimitive. By Lemma  \ref{mono}, we may
assume  $H\leq M\cong  \Omega_3(q)^3. 2^4. \Sym(3)$, where $M$ permutes a decomposition 
$\F_q^9 = W_1 \oplus W_2 \oplus W_3$, with $\dim(W_i)=3$.
Set $h=(xy)^7$ and $N=\Omega_3(q)^3$. From $\dim(V_1(h))=7$, we get
$V_1(h)\cap W_i\neq \{0\}$, whence $hW_i=W_i$, for each $i=1,2,3$.
It follows that 
$(xy)^7 \in N$. Since $7$ is coprime to the index of $N$ in $M$, we get $xy \in N$.
Since $y$ acts as a $3$-cycle on $\{W_1,W_2,W_3\}$, it follows that the elements $(xy)^iy$, $1\leq i\leq 7$, have trace equal to zero.
Thus, $0=\tr(xy^2)=-(a+a^{-1})$ gives the condition $a^2+1=0$.
In this case, $\tr((xy)^3y)=1$, a contradiction.

Now, suppose that $H$ is contained in a maximal subgroup $M$ in class $\mathcal{C}_7$ of $\Omega_n(q)$.
By \cite[Table 8.58]{Ho}, $M\cong \Omega_3(q)^2.[4]$. 
Then, $h=(xy)^7$ belongs to $\Omega_3(q)^2$.
Suppose first that $xy$ is semisimple. Up to conjugation, $h=\diag(\beta_1,1,\beta_1^{-1})\otimes \diag(\beta_2, 1, \beta_2^{-1})$,
for some $\beta_1,\beta_2 \in \F_q^*$.
In order that it has the eigenvalue $1$ with multiplicity (at least) $7$, we need $\beta_1=\beta_2=1$, which gives
$h=\I_9$, a contradiction.
Finally, assume that $xy$ has order divisible by $p$.
Up to conjugation and because of \eqref{Om3}, we have

$$h=\begin{pmatrix}
1 & 1 & 1 \\
0 & 1 & 2 \\
0 & 0 & 1 
    \end{pmatrix}
\otimes
\begin{pmatrix}
\beta & 0 & 0 \\
0 & 1 & 0 \\
0 & 0 & \beta^{-1}
\end{pmatrix},
\quad \beta\in \F_q^*.$$
Hence, $\chi_{h}(t)=(t-1)^3(t-\beta)^3(t-\beta^{-1})^3$.
Since $h$ is a bireflection (i.e., $\dim(V_1(h))=7$), we must have $\beta=1$, in which case 
$\dim(V_1(h))=3$, a contradiction.
\end{proof}

\begin{lemma}\label{deleted}
If $H$ is absolutely irreducible, then  the 
$H$-module $V=\F^{n}$ is not the deleted permutation module of degree $\ell=n+1, n+2$.
\end{lemma}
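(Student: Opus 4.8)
The plan is to argue by contradiction. Suppose $V=\F^{n}$ is the deleted permutation module of degree $\ell$, so that $H$ is (the image of) a subgroup of $\Sym(\ell)$ or $\Alt(\ell)$ acting as in Section \ref{prel}. Matching dimensions against the two descriptions recalled there, the only possibilities are $\ell=n+1$ with $p\nmid\ell$, where $V=u^{\perp}$, and $\ell=n+2$ with $p\mid\ell$, where $V=\overline{W}$. In both cases every $h\in H$ is represented, on the ambient space $\F^{\ell}$, by a permutation matrix, and hence by a matrix with entries in the prime field $\F_{p}$.

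My first step would exploit exactly this rationality. Since $H$ is absolutely irreducible by hypothesis and preserves the form induced by $\I_{\ell}$, which is again defined over $\F_{p}$, the realization over $\F_{p}$ conjugates $H$ into a subfield subgroup of $\Omega_{n}(q)$. If $q>p$ this subgroup lies in a maximal member of class $\mathcal{C}_{5}$, contradicting Lemma \ref{11-C5}. Therefore it suffices to treat the case $q=p$, in which $H$ is genuinely a subgroup of $\Sym(\ell)$ and, in particular, $|H|$ divides $\ell!$.

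For $q=p$ I would test the permutation hypothesis on the element $xy$. Comparing composition factors, the characteristic polynomial of the permutation $\pi_{xy}\in\Sym(\ell)$ attached to $xy$ must equal $\chi_{xy}(t)\,(t-1)^{c}$ with $c=\ell-n\in\{1,2\}$, the extra factors accounting for the deleted trivial composition factors; on the other hand it factors as $\prod_{i}(t^{c_{i}}-1)$, where the $c_{i}\le\ell$ are the cycle lengths of $\pi_{xy}$. Since $\chi_{xy}(t)=(t+a)(t+a^{-1})(t^{n-2}-1)$, the eigenvalues $-a$ and $-a^{-1}$ are then roots of unity of order at most $\ell$, and matching the full factorization confines $a$ to a short explicit list (for instance $a^{2}-a+1=0$, or $a=\pm1$). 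For each surviving value I would exhibit a word in $x$ and $y$ whose order is divisible by a prime $r>\ell$; as $r\nmid\ell!$, this contradicts $H\le\Sym(\ell)$.

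The hard part will be this last verification in the case $q=p$: isolating the finitely many exceptional values of $a$ and, for each, producing an element of prime order exceeding $\ell$. This is the computational core, and it parallels the exceptional-case analysis already carried out in Lemmas \ref{mono} and \ref{9-imtens}. As a convenient shortcut I would instead test the bireflection $(xy)^{n-2}$, whose minimal polynomial is known explicitly: the constraint $\rk\big((xy)^{n-2}-\I_{n}\big)=2$ forces the associated permutation to be a $3$-cycle or a product of two disjoint transpositions, so that the eigenvalues $-a^{\pm(n-2)}$ must lie in $\{1,\omega,\omega^{2}\}$, again pinning $a$ to a root of unity of small order and feeding into the same final contradiction.
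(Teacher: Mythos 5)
Your setup is right (the two cases $\ell=n+1$ with $p\nmid\ell$ and $\ell=n+2$ with $p\mid\ell$ are exactly the paper's), and your reduction to $q=p$ is sound: if $H$ acts as a deleted permutation module, all traces of elements of $H$ lie in $\F_p$, and the identity $4a+\kappa_n=(\tr([x,y])-\varsigma_n)^2-\tr([x,y]^2)$ from the proof of Lemma \ref{11-C5} then forces $a\in\F_p$, hence $q=p$ (citing the statement of Lemma \ref{11-C5} verbatim is slightly off, since being realizable over $\F_p$ is not literally containment in a maximal $\mathcal{C}_5$-subgroup, but re-running its trace argument fixes this). The genuine gap is the endgame. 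Your characteristic-polynomial matching does confine $a$: since the roots of $\prod_i(t^{c_i}-1)$ come in complete packets of primitive $d$-th roots of unity with equal multiplicities, one gets $\phi(\mathrm{ord}(-a))\le 2$, i.e.\ $a=\pm1$, $a^2+1=0$ or $a^2\pm a+1=0$; and some cases die uniformly (e.g.\ $a^2+1=0$: a cycle length divisible by $4$ would force the eigenvalue $-1$, which $(t^2+1)(t^{n-2}-1)(t-1)^c$ lacks, $n-2$ being odd). But $a^2-a+1=0$ \emph{survives} the matching: for $n=9$ the left-hand side equals $(t^3-1)(t^7-1)$ and for $n=11$ it equals $(t^3-1)(t^9-1)$, perfectly consistent with cycle types $(3,7)$ and $(3,9)$. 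This surviving case occurs for infinitely many characteristics $p$, and your only proposed tool for it --- a word in $x,y$ whose order is divisible by a prime $r>\ell$ --- cannot work uniformly, because the order of any fixed word varies with $p$. The paper's proof does its real work precisely here, with arguments uniform in $p$: since $\dim V_1(xy)=1$, the permutation attached to $xy$ has at most two orbits, which pins down its possible orders; these clash with $4\mid\mathrm{ord}(xy)$, or with $(xy)^{n-2}$ being a nontrivial element of order $p$ annihilated by every admissible permutation order, or with integer trace values of auxiliary elements ($[x,y]$ for $n=9$, $(xy^2)^2xy$ for $n=13$); large-prime-order computations are reserved for finitely many explicit pairs $(n,q)$. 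What you label ``the hard part'' and ``the computational core'' is thus not a verification to be deferred --- it is the proof, and the mechanism you propose for it fails.

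Your bireflection shortcut is also weaker than claimed, on two counts. First, it only yields that $-a^{\pm(n-2)}$ lies in $\{1,\omega,\omega^2,-1\}$, i.e.\ $a^{n-2}$ is a sixth root of unity; this allows up to $6(n-2)$ values of $a$, a far longer list than the one obtained from $xy$ itself, all feeding into the same non-uniform endgame. Second, the cycle-type claim is false in small characteristic: when $(xy)^{n-2}$ is a \emph{unipotent} bireflection (all eigenvalues $1$, $\rk((xy)^{n-2}-\I_n)=2$, which happens when $a^{n-2}=\pm1$), the associated permutation can be one or two $3$-cycles if $p=3$, or a $5$-cycle if $p=5$, not only a $3$-cycle or a double transposition.
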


\begin{proof}
Assume the contrary. From  what has been seen at the end of Section \ref{prel},
up to conjugation, we may assume $H\leq \Sym(\ell)\leq \GL_\ell(p)$, with $\ell = n +1, n + 2$. 

\noindent \textbf{Case $\ell=n+1$.}
Fix $h \in H$ such that $\dim(V_1(h))=1$ and call $\zeta$ its preimage in $\Sym(\ell)\leq \GL_\ell(p)$. 
Then, $\zeta$ has at most two orbits. 
It follows that  $\tr(\zeta)=0$ if $\zeta$ is an $\ell$-cycle or the product of two cycles 
of length at least two.
Otherwise, $\tr(\zeta)=1$ and $\zeta$ is a cycle of length $\ell-1$.
Note that $\zeta$ and $h$ have the same order.

We may take $h=xy$, as  $\dim(V_1(xy))=1$. Hence, $\tr(\zeta)-1=\tr(xy)=-(a+a^{-1})$ gives the following two cases:
if $\tr(\zeta)=0$, then $a^2-a+1=0$;
if $\tr(\zeta)=1$, then  $a^2+1=0$.
In the second case, the characteristic polynomial $\chi_{xy}(t)$ is divisible by $t^2+1$, and then $xy$ has order divisible by $4$.
However, $\zeta$ has odd order $n$, being an $n$-cycle, a contradiction.

So, assume $a^2-a+1=0$. In this case, $t^2+t+1$ divides $\chi_{xy}(t)$ and hence the order of $\zeta$ is divisible by $3$.
Furthermore, $(xy)^{n-2}$ has order $p$ when $n\in \{11,17\}$.
For $n=11$, we get that the order of $\zeta$ is $6$, $9$ or $12$, in contrast with $(xy)^9$ of odd order $p$.
For $n=17$, the order of $\zeta$ is $9$, $12$, $15$ or $18$. However, $(xy)^9\neq \I_{17}$ and 
the other values are in contrast with $(xy)^{15}$ of odd order $p$.
For $n\in \{9,13\}$, we apply the previous argument to other elements $h$ such that $\dim(V_1(h))=1$.
For $n=9$, we take $h=[x,y]$ whose trace is equal to $1$, a contradiction.
For $n=13$, we take $h= (xy^2)^2xy$, which has trace equal to $3$.
Since $\tr(h)=\tr(\zeta)-1 \in \{-1,0\}$, we get an absurd unless $p=3$.
However, in this case $a=-1$ and $h^8$ has order $41$, a contradiction as $h^8\in H\leq \Sym(14)$.

\noindent \textbf{Case $\ell=n+2$.}
In this case $q \mid \ell$,  hence we need to consider only the following cases:
(a) $(n,q)=(9,11)$;
(b) $(n,q)=(11,13)$;
(c) $(n,q)=(13,3)$;
(d) $(n,q)=(13,5)$;
(e) $(n,q)=(17,19)$.
Take $g=(xy)^3 (xy^2)^7$ in case (a); $g= xy  (xy^2)^2$ in cases (b), (c) and (e); $g= xy (xy^2)^3$ in case (d).
By direct computation, in all these cases the order of $g$  is divisible by a prime $\varrho\geq n+4$, 
a contradiction as $\varrho$ should divide $|\Sym(n+2)|$.
\end{proof}

\begin{theorem}\label{th-1137} 
Suppose $n \in \{9,11,13,17\}$ and let $a \in \F_q^*$ be such that
$${\rm (i)}\;\; \F_p[a]=\F_q;  \quad {\rm (ii)}\;\; -a\in (\F_q^\ast)^2;
\quad {\rm (iii)}\;\;
\left\{\begin{array}{ll}
a^2-a-1\neq 0 & \textrm{ if } n=9; \\ 
(a-1)(a^3+2a^2+a+1)\neq 0  & \textrm{ if } n=11; \\ 
a^4+a^2-a+1 \neq 0 & \textrm{ if  } n=17.
\end{array}\right.$$
Then, $H=\Omega_{n}(q)$. In particular, $\Omega_{n}(q)$ is $(2,3)$-generated for any $q$ odd.
\end{theorem}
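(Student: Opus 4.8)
The plan is to invoke Aschbacher's theorem in the form used in \cite{Ho,KL}: every maximal subgroup of $\Omega_n(q)$ lies in one of the geometric classes $\mathcal{C}_1,\dots,\mathcal{C}_7$ or in the class $\mathcal{S}$ of almost simple groups, so it suffices to prove that $H$ is contained in none of them, whence $H=\Omega_n(q)$. First I would record the role of the three hypotheses: condition (ii) gives $x_3\in\Omega_3(q)$, so that $H\leq\Omega_n(q)$ as established in Section \ref{11-17}; condition (i) is exactly what Lemma \ref{11-C5} requires; and condition (iii) is the genericity assumption feeding Lemmas \ref{11-irr} and \ref{mono}. Throughout I would lean on two facts already in hand: $(xy)^{n-2}$ is a nontrivial bireflection, and $H$ is transitive on $e_1,\dots,e_{n-3}$ (Lemma \ref{transitivity}).

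\emph{Geometric classes.} By condition (iii) and Lemma \ref{11-irr}, $H$ is absolutely irreducible, excluding $\mathcal{C}_1$; Lemma \ref{11-C5} excludes $\mathcal{C}_5$, and Lemma \ref{mono} excludes the monomial subgroups. For $n=9$, Lemma \ref{9-imtens} removes the remaining imprimitive subgroups and the tensor classes $\mathcal{C}_4,\mathcal{C}_7$. For the prime dimensions $n\in\{11,13,17\}$ these classes collapse: an equidimensional imprimitive decomposition must have blocks of size $1$ and is thus monomial (already excluded), while $\mathcal{C}_4,\mathcal{C}_7$ cannot occur since $n$ is prime. The classes not covered by a named lemma—namely $\mathcal{C}_3$ for every $n$ and $\mathcal{C}_6$ (nonempty since each $n$ is a prime power)—I would settle by direct computation with the explicit generators. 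For prime $n$ the class $\mathcal{C}_3$ yields only a small solvable field-extension subgroup, excluded by the size and the bireflection of $H$; for $n=9$ the extension $\Omega_3(q^3).3$ is ruled out because the eigenvalue-$1$ multiplicity of an $\F_{q^3}$-linear operator is a multiple of $3$, incompatible with the multiplicity $7$ of the bireflection $(xy)^7$ (supplemented, if $(xy)^7$ lies in the semilinear coset, by an order computation). The extraspecial normalizers of $\mathcal{C}_6$ I would dispose of by comparing element orders, the small exceptional fields being handled by the large-order words already appearing in the earlier lemmas.

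\emph{Class $\mathcal{S}$.} This is the crux of the argument. Lemma \ref{deleted} removes the deleted permutation modules for $\Sym(\ell)$ and $\Alt(\ell)$ with $\ell=n+1,n+2$. For the remaining almost simple candidates I would read off the explicit list of $\mathcal{S}$-maximal subgroups of $\Omega_n(q)$ in dimensions $9,11,13,17$ from the tables in \cite{Ho}, and eliminate each by an element-order or trace argument: the factor $t^{n-2}-1$ in $\chi_{xy}(t)=(t+a)(t+a^{-1})(t^{n-2}-1)$, together with the specific words used in Lemmas \ref{11-irr}, \ref{mono} and \ref{deleted}, produces elements whose order (divisible by a prime $r\geq n+2$ in the exceptional small-field cases) or trace is incompatible with every candidate; condition (i), forcing $\F_q=\F_p[a]$, guarantees these invariants are realised over $\F_q$ and not a proper subfield. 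The bireflection $(xy)^{n-2}$ further trims the list, since an irreducible almost simple group containing a noncentral bireflection is highly constrained.

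I expect class $\mathcal{S}$ to be the principal obstacle, being the only one not dispatched by a single uniform lemma: it forces a dimension-by-dimension comparison of the concrete generators against a finite but genuinely case-dependent catalogue of low-dimensional irreducible almost simple subgroups, with the elimination resting on checking, in each case, that $H$ realises element orders or traces unavailable in the candidate. Once all classes are excluded, maximality yields $H=\Omega_n(q)$. Finally, to conclude that $\Omega_n(q)$ is $(2,3)$-generated for every odd $q$, I would observe that the values of $a$ excluded by (ii) and (iii) are few: a straightforward counting argument, with the finitely many smallest fields verified directly, shows that an $a$ with $\F_p[a]=\F_q$, $-a\in(\F_q^\ast)^2$ and satisfying (iii) always exists.
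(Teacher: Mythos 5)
Your overall strategy---run through all Aschbacher classes and show $H$ lies in none of them---is the classical route, and your treatment of the geometric classes is largely sound: Lemmas \ref{11-irr}, \ref{11-C5}, \ref{mono} and \ref{9-imtens} do exclude $\mathcal{C}_1$, $\mathcal{C}_5$, the monomial subgroups and (for $n=9$) the imprimitive and tensor classes, and for prime $n$ the classes $\mathcal{C}_2$ (non-monomial blocks), $\mathcal{C}_4$ and $\mathcal{C}_7$ are indeed empty. (One small correction: $\mathcal{C}_6$ is in fact empty for odd-dimensional orthogonal groups, since the $r^m$-dimensional representations of an odd extraspecial $r$-group are not self-dual, so there is nothing to exclude there.) The genuine gap is exactly where you place the crux: class $\mathcal{S}$. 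You propose to read the $\mathcal{S}$-maximal subgroups of $\Omega_n(q)$ ``from the tables in \cite{Ho}'', but that reference only covers dimensions up to $12$; no such tables exist for $n=13,17$, so your catalogue is unavailable for half the cases. Even for $n=9,11$, your elimination (``an element-order or trace argument'' per candidate) is an unexecuted, case-dependent plan rather than an argument. As written, the central step of the proof is missing.

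The paper avoids this entirely, and this is the key idea your proposal gestures at but never uses decisively. You treat the bireflection $(xy)^{n-2}$ as a supplementary device that ``further trims the list''; in the paper it is the engine of the whole proof. Once $H$ is known to be absolutely irreducible, primitive and tensor indecomposable (Lemmas \ref{11-irr}, \ref{mono}, \ref{9-imtens}, using hypothesis (iii)), the Guralnick--Saxl classification \cite[Theorem 7.1]{GS} of such subgroups containing a bireflection, combined with Lemma \ref{11-C5} (which uses hypothesis (i) to rule out realization over a proper subfield), leaves exactly two possibilities: $H$ is an alternating or symmetric group on a deleted permutation module, which Lemma \ref{deleted} excludes, or $H=\Omega_n(q)$. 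This collapses class $\mathcal{S}$ and the residual geometric classes ($\mathcal{C}_3$, $\mathcal{C}_6$) into two outcomes with no tables and no dimension-by-dimension analysis. Your closing argument for the existence of a suitable $a$ (counting, small fields checked directly, $a=-1$ for $q=p$) does match the paper's.
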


\begin{proof}
By condition (ii), $H$ is a subgroup of $\Omega_{n}(q)$. 
By condition (iii), Lemmas \ref{11-irr}, \ref{mono} and \ref{9-imtens}, the group $H$ is absolutely
irreducible and is neither contained in a maximal subgroup in class $\mathcal{C}_2$ of $\Omega_n(q)$
nor contained in any maximal subgroup in class $\mathcal{C}_7$.
Since it contains the bireflection $(xy)^{n-2}$, we can apply \cite[Theorem 7.1]{GS} which, combined with (i) and Lemma \ref{11-C5},  gives
two possibilities:
(a) $H$ is an alternating or symmetric group of degree $\ell$ and $\F^n$ is the deleted permutation module of dimension $\ell-1$ or $\ell-2$; 
(b) $H=\Omega_{n}(q)$.
Case (a) is excluded by Lemma \ref{deleted}: we conclude that  $H=\Omega_{n}(q)$.

Finally, we have to prove that there exists an element $a$ satisfying all the requirements. If $q=p$, take $a=-1$.
Suppose now $q=p^f$ with $f\geq 2$, and let $\N(q)$ be the number of elements $b\in \F_q^*$ such that $\F_p[b]\neq \F_q$.
By \cite{PTV} we have $\N(q)\leq  p\frac{p^{\lfloor f/2 \rfloor}-1}{p-1}$, hence
it suffices to check when $\frac{p^f-1}{2} - p\frac{p^{\lfloor f/2 \rfloor}-1}{p-1}> 4$.
This condition is fulfilled unless $q=3^2$.
So, assume $q=9$ and take $a\in \F_9^*$ whose minimal polynomial over $\F_3$ is $t^2+1$.
Then, $\F_3[a]=\F_9$ and $-a=(a+1)^2$ is a square.
\end{proof}

\section{Generators for $n\in \{12,15,16\}$ and for $n\geq 18$}\label{gen}

For $n\in \{12,15,16\}$ and for $n\geq 18$, write $n=3m+9+r$, with $m\geq 1$ and $r \in \{0,1,2\}$.
Take the symmetric bilinear form corresponding to the  Gram matrix 
$J=\begin{pmatrix} 
\I_{n-8} &  0 & 0\\ 0 & 0 & \I_4 \\ 0 & \I_4 & 0
 \end{pmatrix}$,
 having $\det(J)=1$.  
For any  $a\in \F_q^*$, we define four matrices $x_1,x_2,y_1,y_2$ of $\GL_n(q)$ as follows.
\begin{itemize}
\item[($x_1$)]  $x_1$ acts on $\C$ as 
the product $\nu_1\nu_2$ of the following two disjoint  permutations:
$$\nu_1=\left\{\begin{array}{cl}
\id & \textrm{if } r=0 \textrm{ and } n \textrm{ is odd},\\
(e_1,e_2) & \textrm{if } r=0 \textrm{ and } n \textrm{ is even},\\
(e_1,e_2) & \textrm{if } r=1 \textrm{ and } n \textrm{ is odd},\\
(e_1,e_2)(e_3,e_6) & \textrm{if } r=1 \textrm{ and } n \textrm{ is even},\\
(e_1,e_3)(e_2,e_4) & \textrm{if } r=2 \textrm{ and } n \textrm{ is odd},\\
(e_1,e_3)(e_2,e_4)(e_7,e_{10}) & \textrm{if } r=2 \textrm{ and } n \textrm{ is even},
\end{array}
\right.$$
and
$$\nu_2= \prod_{j=0}^{m-1} (e_{3j+r+3},e_{3j+r+4})=(e_{r+3},e_{r+4})(e_{r+6},e_{r+7})\cdots (e_{n-9},e_{n-8}).$$
\item[($x_2$)] $x_2=\diag(\I_{n-9},\tilde x)$, where $\tilde x=\tilde x(a)$ is as in Figure \ref{tilde}. 
\item[($y_1$)] $y_1$ 
acts on $\C$ as the permutation
$$\begin{array}{rcl}
\nu_3& =&  \prod\limits_{j=0}^{m-1} \left(e_{3j+r+1},e_{3j+r+2},e_{3j+r+3}\right)\\
 & =& (e_{r+1},e_{r+2},e_{r+3})(e_{r+4},e_{r+5},e_{r+6})\cdots
(e_{n-11}, e_{n-10}, e_{n-9}).
     \end{array}$$
\item[($y_2)$] $y_2=\diag(\I_{n-9},\tilde y)$, where $\tilde y$ is as in Figure \ref{tilde}. 
\end{itemize}

\begin{figure}[ht]
$$\tilde x=\begin{pmatrix}
1 &  0 &   0 &   0 &   0 &   0 &   0 &   0 &   0\\
0 &  0 &   1 &   0 &   0 &   0 &   0 &   0 &   0\\
0 &  1 &   0 &   0 &   0 &   0 &   0 &   0 &   0\\
0 &  0 &   0 &  -1 &   0 &   0 &   0 &   0 &   0\\
0 &  0 &   0 &   a &   1 &   0 &   0 &   0 &   0\\
0 &  0 &   0 &   0 &   0 &   0 &   1 &   0 &   0\\
0 &  0 &   0 &   0 &   0 &   1 &   0 &   0 &   0\\
0 &  0 &   0 &   0 &   0 &   0 &   0 &  -1 &   a\\
0 &  0 &   0 &   0 &   0 &   0 &   0 &   0 &   1
\end{pmatrix},\;\; \tilde y=\begin{pmatrix}
  -1 & 0 & 0 & 0 & 0 & 1 & 0 & 0 & 0\\
   0 & 0 & 0 & 0 & 0 & -\frac{1}{2} & 0 & 0 & 0\\
   0 & 0 & 0 & 0 & 0 & 0 & 0 & 0 & 1\\
   0 & 0 & 0 & 0 & 0 & 0 & 1 & 0 & 0\\
   0 & 0 & 0 & 1 & 0 & 0 & 0 & 0 & 0\\
  -2 &-2 & 0 & 0 & 0 & 1 & 0 & 0 & 0\\
   0 & 0 & 0 & 0 & 1 & 0 & 0 & 0 & 0\\
   0 & 0 & 1 & 0 & 0 & 0 & 0 & 0 &0\\
   0 & 0 & 0 & 0 & 0 & 0 & 0 & 1 & 0
\end{pmatrix}.$$
\caption{Alternative generators of $\Omega_9(q)$.}\label{tilde}
\end{figure}

Let us  identify $\Sym(n-8)$ with the group of permutation matrices fixing $\{e_j: 1\leq j\leq n-8\}$
and acting as the identity on $\langle e_{n-7},e_{n-6},\ldots,e_n\rangle$.
The matrix $x_1$ is the product of $N$ transpositions in $\Sym(n-8)$, where
$N$ is as  follows:
\begin{center}
\begin{tabular}{l|lll}
& $r=0$ & $r=1$ & $r=2$ \\\hline
$n$ even & $N=m+1$ & $N=m+2$ & $N=m+3$ \\
$n$ odd & $N=m$ & $N=m+1$ & $N=m+2$
\end{tabular}
\end{center}
Now, $n$ is odd if and only if $m$ and $r$ have the same parity. It follows that $N$ is always even,
whence $x_1 \in \Alt(n-8)\leq \Omega_n^\epsilon(q)$.
In particular, $x_1$ is an involution and the same is easily verified for $x_2$.
To see that $x_2 \in \Omega_n^\epsilon(q)$, note that $\tilde x=\diag \left(1,h,h^{-\T}\right)$ with $h\in \SL_4(q)$.
Since $\diag \left(1,g,g^{-\T}\right)\in \SO_9(q)$ for each $g\in \GL_4(q)$, we conclude that
$\tilde x$  is in  $\Omega_9(q)$.

Clearly, $y_1$ and $y_2$ have order $3$ and  determinant $1$.
Moreover, $y_1\in \Alt(n-9)\le \Omega_n^\epsilon(q)$, and
$y_2^\T J y_2 =J$.
Since $x_1x_2=x_2x_1$ and $y_1y_2=y_2y_1$, we conclude  that
$x=x_1x_2$ and $y=y_1y_2$ have respective orders $2$ and $3$ and that 
$$H:=\langle x,y\rangle \leq \Omega^\epsilon_n(q).$$
We will also assume that $a\in \F_q^*$ is such that  $\F_p[a]=\F_q$.

When $n\neq 12$, we can decompose  $\F_q^{n}$ into the  direct sum of the following $[x,y]$-invariant subspaces.
Take
$$\mathcal{A}=\left\{\begin{array}{ll}
\langle  e_1, e_3, e_4 \rangle & \textrm{if } n=15,\\
\langle  e_1,  e_5 \rangle \oplus \langle e_2, e_4  \rangle & \textrm{if } n=16,\\
\langle  e_1, e_2, e_4, e_5 \rangle \oplus \langle e_3, e_7, e_8  \rangle & \textrm{if } n=19,\\
\langle  e_1, e_2, e_6, e_8 \rangle \oplus \langle e_3, e_4, e_5,e_9  \rangle & \textrm{if } n=20,\\
\langle  e_1,e_2,e_3,e_4,e_5,e_6,e_8,e_9\rangle \oplus \langle e_7,e_{11},e_{12}\rangle & \textrm{if } n= 23.
\end{array}\right. $$
Otherwise,
$$\mathcal{A} = \left\{\begin{array}{ll}
\langle  e_1, e_2, e_3, e_4, e_6, e_7\rangle & \textrm{if } r=0,\\
\langle  e_1, e_2, e_4, e_5 \rangle \oplus \langle e_3, e_6, e_7, e_8, e_{10}, e_{11} \rangle & \textrm{if } r=1,\\
\langle  e_1,e_2,e_3,e_4,e_5,e_6,e_8,e_9\rangle \oplus \langle e_7,e_{10},e_{11},e_{12},e_{14},e_{15}\rangle & \textrm{if } r=2.
\end{array}\right.$$
Moreover,
$$\begin{array}{rcl}
\mathcal{B}& =& \bigoplus\limits_{j=0}^{m-4-r} \langle e_{5+4r+3j}, e_{9+4r+3j}, e_{10+4r+3j}\rangle, \\
\mathcal{C} &=& \langle e_{n-13}, e_{n-10}, e_{n-9}, e_{n-8}, e_{n-7}, e_{n-6}, e_{n-5}, e_{n-4}, e_{n-3}, e_{n-2}, e_{n-1}, e_n \rangle.
\end{array}$$

\begin{lemma} \label{ana}
Assume $n\neq 12$. Then, $\left([x,y]_{|{\mathcal A}}\right)^{24}=\I$ and $\left([x,y]_{|\mathcal B}\right)^3=\I$.   
\end{lemma}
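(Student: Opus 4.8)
The statement is an explicit finite computation, so the plan is to reduce both claims to the cycle structure of a single permutation. The key preliminary observation is that the indices supporting $\mathcal{A}$ and $\mathcal{B}$ all lie in the first $n-9$ coordinates, and — as one verifies while checking invariance — the $[x,y]$-orbits of the corresponding basis vectors never trigger the single ``coupling'' transposition $(e_{n-9},e_{n-8})$ of $\nu_2$; equivalently, these orbits stay within $\{e_1,\dots,e_{n-10}\}$, where $\nu_1,\nu_2,\nu_3$ keep them. Consequently $x_2=\diag(\I_{n-9},\tilde x)$ and $y_2=\diag(\I_{n-9},\tilde y)$ act as the identity throughout, so on $\mathcal{A}$ and on $\mathcal{B}$ the matrices $x$ and $y$ agree with the permutation matrices $x_1=\nu_1\nu_2$ and $y_1=\nu_3$. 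Hence $[x,y]$ restricts to the permutation matrix of the permutation $[x_1,y_1]$ on the supporting indices, and it suffices to read off its cycle type. A useful byproduct is that $a$ plays no role in either restriction.

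For $\mathcal{B}$ the blocks $\langle e_{5+4r+3j},e_{9+4r+3j},e_{10+4r+3j}\rangle$ are pairwise disjoint translates lying in the ``bulk'', where $\nu_1$ is inactive, so it is enough to trace one representative block. Computing $[x_1,y_1]=\nu_2\nu_3^2\nu_2\nu_3$ on such a triple shows the three basis vectors are cyclically permuted (the intermediate indices remaining below $n-9$), i.e.\ the restriction is a $3$-cycle; since all blocks behave identically, $\left([x,y]_{|\mathcal{B}}\right)^3=\I$.

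For $\mathcal{A}$ the permutation $[x_1,y_1]$ genuinely feels the initial permutation $\nu_1$, so I would organize the verification into the finitely many cases $n\in\{15,16,19,20,23\}$ together with the three generic families $r\in\{0,1,2\}$, each split according to the parity of $n$ (which toggles the shape of $\nu_1$). In every case, tracing the images of the basis vectors of $\mathcal{A}$ exhibits $[x_1,y_1]_{|\mathcal{A}}$ as a product of disjoint cycles whose lengths all lie in $\{1,2,3,4,6,8\}$; as each divides $24$, we obtain $\left([x,y]_{|\mathcal{A}}\right)^{24}=\I$. The longest cycle, of length $8$, appears precisely when $r=2$ (and for $n=23$), and combines with a $3$-cycle to give an element of order exactly $24$; this is what makes $24$, rather than $12$, the correct uniform exponent.

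The work here is entirely bookkeeping, with no conceptual obstacle. The two points requiring care are: (i) confirming that none of the $\mathcal{A}$- or $\mathcal{B}$-orbits escapes into the last nine coordinates — this is exactly what certifies that the restrictions are honest permutation matrices and eliminates any dependence on $a$; and (ii) tracking how the parity-dependent factor $\nu_1$ (for instance the extra transposition $(e_7,e_{10})$ when $r=2$ and $n$ is even) merges or lengthens cycles. It is precisely this interaction that can fuse three-element orbits into an $8$-cycle, and the main thing to check across the cases is that no cycle of length $5$, $7$, or $\geq 9$ ever arises, so that the exponent $24$ is never exceeded.
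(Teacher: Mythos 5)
Your proposal is correct and takes essentially the same approach as the paper: the paper's proof simply writes out, case by case (the special values $n\in\{15,16,19,20,23\}$ and the generic classes of $n \bmod 6$), the permutations that $[x,y]$ induces on $\mathcal{A}$ and on each summand of $\mathcal{B}$ (a $3$-cycle), and the cycle lengths occurring are exactly your set $\{2,3,4,6,8\}$, all dividing $24$. The only slip, harmless to the argument, is your side remark locating the $8$-cycle: it occurs precisely when $n\equiv 5 \pmod 6$ (including $n=23$), whereas for $r=2$ with $n$ even the cycle type on $\mathcal{A}$ is $4+4+6$, of order $12$.
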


\begin{proof}
For $n \in \{15,16, 19,20, 23\}$, the element  $[x,y]$ acts on $\mathcal{A}$ as the following permutation
$$\left\{\begin{array}{ll}
(e_3,e_4) & \textrm{if } n =15,\\
(e_1,e_5)(e_2,e_4) & \textrm{if } n =16,\\ 
(e_1,e_5,e_4,e_2) (e_3,e_8,e_7) & \textrm{if } n =19,\\
(e_1, e_6, e_8, e_2)(e_3, e_4,e_9,e_5) & \textrm{if } n=20,\\ 
(e_1,e_6,e_5,e_3,e_4,e_9,e_8,e_2)(e_7,e_{12},e_{11})  & \textrm{if } n =23.
         \end{array}\right. $$
Otherwise, it acts on $\mathcal{A}$ as
$$\left\{\begin{array}{ll}
(e_1,e_4,e_3,e_2,e_7,e_6) &  \textrm{if } n\equiv 0 \pmod 6,\\
(e_1,e_5,e_4,e_2) (e_3,e_8,e_7)(e_6,e_{11},e_{10}) & \textrm{if } n \equiv 1 \pmod 6,\\  
(e_1, e_6, e_8, e_2)(e_3, e_4,e_9,e_5) (e_7,e_{12},e_{11},e_{10},e_{15},e_{14}) & \textrm{if } n\equiv 2\pmod 6,\\
(e_2,e_7,e_6)(e_3,e_4) & \textrm{if } n\equiv 3\pmod 6,\\
(e_1,e_5)(e_2,e_4)(e_3,e_8,e_7,e_6, e_{11},e_{10})  & \textrm{if } n \equiv 4 \pmod 6,\\
(e_1,e_6,e_5,e_3,e_4,e_9,e_8,e_2)(e_7,e_{12},e_{11})(e_{10}, e_{15}, e_{14})  & \textrm{if } n \equiv 5 \pmod 6.  
         \end{array}\right. $$
Finally,  $[x,y]$ acts on each summand of $\mathcal B$ as the $3$-cycle
$\left(e_{5+4r+3j}, e_{10+4r+3j}, e_{9+4r+3j} \right)$.
\end{proof}

By Lemma \ref{ana} and direct computations (in particular, for $n=12$), the element $\tau=[x,y]^{24}$ 
has characteristic polynomial $(t-1)^{n}$. 
More precisely, setting $$\vartheta_0=
\begin{pmatrix}
       1 &       0&     -4a  &      0&  -32a^2 & -36a^2 &    -8a&  -56a^2\\
       0 &       1&     -4a &       0&  -28a^2&  -32a^2 &    -8a&  -64a^2\\
       0 &       0&        1 &       0&      8a&      8a  &      0&     16a\\
       0 &       0&     -8a &       1&  -72a^2&  -64a^2  &  -16a &-128a^2\\
       0&        0 &       0&        0 &       1&        0  &      0 &       0\\
       0&        0&        0&        0 &       0 &       1  &      0&        0\\
       0&        0&        0&        0 &     4a  &    4a   &     1&      8a\\
       0&        0 &       0&        0  &      0&        0   &     0&        1  
\end{pmatrix},$$
we have $\tau=\diag(\I_{n-8}, \vartheta)$, where $\vartheta= \vartheta_0+8(
E_{1,6}+2E_{2,8}+2E_{4,5}- E_{2,5}-2E_{1,8}- 2E_{4,6})$ if $n\in \{12,16,20\}$, $\vartheta=\vartheta_0$ otherwise.
Notice that the minimal polynomial of $\vartheta$ is $(t-1)^3$.
It follows that $\tau$ is an element of order $p$ fixing the $9$-dimensional subspace 
$S_9=\left\langle e_{n-8},e_{n-7},\ldots,e_n \right\rangle$.
Furthermore,  the fixed point space of $\tau_{|S_9}$ has dimension $5$, unless $n\in \{12,16,20\}$ and $a^2= 3$,
in which case it has dimension $7$.

\section{The case  $n\in\{15, 18, 19\}$ or $n\geq 21$}\label{gene}

The subspace $S_9$ is invariant under $K=\langle y, \tau\rangle$: 
our first aim is to find conditions on $a\in \F_q^\ast$ so that  $K_{|S_9}=\Omega_9(q)$.
In the following, we identify $y,\tau$ with their restrictions to $S_9$.

\begin{lemma}\label{irr}
The group $K_{|S_9}$ is absolutely irreducible.
\end{lemma}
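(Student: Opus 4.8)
The plan is to argue by contradiction: assume that $U$ is a proper nonzero $K_{|S_9}$-invariant subspace of $S_9=\F^9$ and force $U=S_9$. The first point is that the restriction of $J$ to $S_9$ is a nonsingular symmetric form $J_9$ preserved by $K_{|S_9}$. Indeed, $y_{|S_9}=\tilde y$ and $y_2^\T J y_2=J$ give $\tilde y^\T J_9\tilde y=J_9$, while $\tau=[x,y]^{24}\in\Omega_n^\epsilon(q)$ preserves $J$ and stabilizes the invariant subspace $S_9$, so $(\tau_{|S_9})^\T J_9(\tau_{|S_9})=J_9$. Hence Corollary~\ref{complement} is available for $H=K_{|S_9}$ with the form $J_9$: for $\lambda=\pm 1$ we may always assume that the relevant eigenvector lies in $U$.

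Writing $f_1=e_{n-8},\dots,f_9=e_n$ for the basis of $S_9$, the element $y_{|S_9}=\tilde y$ splits the basis into three $y$-blocks: the $3$-cycles $(f_3,f_8,f_9)$ and $(f_4,f_5,f_7)$, together with an order-$3$ linear action on $\langle f_1,f_2,f_6\rangle$. The element $\tau_{|S_9}=\diag(1,\vartheta)$ is unipotent with minimal polynomial $(t-1)^3$, and the off-diagonal entries of $\vartheta$, all divisible by $a$, couple these three blocks. First I would exhibit an explicit word $g$ in $y$ and $\tau$ whose $\lambda$-eigenspace, for some $\lambda\in\{1,-1\}$, is one-dimensional and spanned by a single $f_{i_0}$; by the previous paragraph this forces $f_{i_0}\in U$. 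Since $y$ has $3$-dimensional eigenspaces and $\tau$, being unipotent, fixes a $5$-dimensional subspace, neither generator alone suffices, so a nontrivial product $g$ is unavoidable, and I expect a few small values of $a$ or $q$ to require a separate element, exactly as happens in Lemma~\ref{11-irr}.

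Once a single $f_{i_0}\in U$ is secured, the remaining task is a transitivity argument. I would apply powers of $y$ to sweep through the $y$-block containing $f_{i_0}$, and then use that $\tau f-f\in U$ whenever $f\in U$ (because $\tau U=U$) to cross between the three $y$-blocks: the vectors $\tau f_j-f_j$ mix basis vectors from different blocks, for instance $\tau f_4-f_4=-4a(f_2+f_3+2f_5)$ involves one vector from each of $\langle f_1,f_2,f_6\rangle$, $\langle f_3,f_8,f_9\rangle$ and $\langle f_4,f_5,f_7\rangle$. Alternating these two moves should produce a spanning set of $S_9$ inside $U$, giving $U=S_9$ and the desired contradiction.

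The main obstacle is this last step: verifying that the $K_{|S_9}$-orbit of the single vector $f_{i_0}$ genuinely spans $S_9$. This is precisely where the $a$-dependent entries of $\vartheta$ are needed, since they are what links the three $3$-element $y$-orbits, and one must check that the coupling does not degenerate for any admissible $a$ (i.e.\ any $a$ with $\F_p[a]=\F_q$). The bookkeeping is routine but must be tracked carefully across the blocks; producing the word $g$ with a \emph{genuinely} one-dimensional $\pm1$-eigenspace, valid for all admissible $a$, is the other delicate point.
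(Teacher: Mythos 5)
Your overall strategy is exactly the paper's: use Corollary \ref{complement} (legitimately available, as you verify, since $K_{|S_9}$ preserves the nonsingular restriction of $J$ to $S_9$) to force an explicit $\pm 1$-eigenvector of a word in $y,\tau$ into the putative invariant subspace $U$, and then show that the $K$-orbit of that vector spans $S_9$. However, both steps that constitute the actual proof are left as declared intentions, and this is a genuine gap. First, you never exhibit the word $g$; moreover you demand more than is needed (and possibly more than is achievable): you insist that its one-dimensional eigenspace be spanned by a single basis vector $f_{i_0}$. The paper takes $g=[y,\tau]$, whose fixed space on $S_9$ is one-dimensional but spanned by $s=e_{n-8}-e_{n-7}=f_1-f_2$, \emph{not} a basis vector; since your subsequent block-sweeping plan is built on $f_{i_0}$ lying in one of the $y$-cycles $(f_3,f_8,f_9)$ or $(f_4,f_5,f_7)$, it would have to be reworked for an eigenvector such as $f_1-f_2$ that lives in the non-permutation block $\langle f_1,f_2,f_6\rangle$.

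Second, the spanning verification --- which you yourself identify as ``the main obstacle'' --- is precisely where the content of the proof lies, and your sketch gives no mechanism for ruling out degeneration at special values of $a$. The paper settles this uniformly, with no exceptional cases, by a determinant trick: it applies two explicit lists of nine words in $y,\tau$ to $s$, obtaining matrices $M_1,M_2$ with $\det(M_1)=-2^{35}a^{10}(4a^2+3)$ and $\det(M_2)=-2^{35}a^{10}(28a^2-3)$; these cannot both vanish in odd characteristic, so the orbit of $s$ spans $S_9$ for \emph{every} $a\neq 0$. This uniformity matters: Proposition \ref{Om9-K9} invokes the lemma for all $a$ with $\F_p[a]=\F_q$, so your fallback of handling ``a few small values of $a$ or $q$'' by separate elements would have to be made exhaustive and explicit. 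As it stands, the proposal is a correct plan that matches the paper's route, but until the word $g$ is produced and the spanning computation is carried out (and shown not to degenerate for any admissible $a$), nothing is proved.
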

 
\begin{proof}
We apply Corollary \ref{complement} to $g=[y, \tau]$ and $\lambda=1$.
So, we may assume that the eigenvector $s=e_{n-8} - e_{n-7}$ is contained in $U$.
Take the matrices $M_1,M_2$, whose columns are the images of $s$ under the following elements:
$$\begin{array}{rl}
M_1: & \I_9,\; y, \; y^2, \;  \tau y^2, \; \tau^2 y^2,\;
y\tau y^2,\; y^2\tau y^2,\; y\tau^2 y^2,\; y^2\tau^2 y^2;\\
M_2: &  \I_9,\; y, \; y^2, \;  \tau y^2, \; \tau^2 y^2,\;
y\tau y^2,\; y\tau^2 y^2,\; (\tau y^2)^2,\; \tau y^2\tau^2 y^2.  
  \end{array}$$
Then, $\det(M_1)= -2^{35} a^{10} (4 a^2+3)$
and $\det(M_2)=-2^{35} a^{10} (28a^2 - 3)$.
Clearly, these two matrices cannot be both singular, whence $\dim(U)=9$, a contradiction.
\end{proof}

\begin{lemma}\label{monS}
The group $K_{|S_9}$ is neither monomial nor contained in any maximal subgroup
$\PSL_2(8)$, $\PSL_2(17)$, $\Alt(10)$, $\Sym(10)$, $\Sym(11)$ in  class $\mathcal{S}$ of $\Omega_9(q)$.
\end{lemma}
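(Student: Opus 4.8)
The plan is to dispose of the monomial case and the five $\mathcal{S}$-candidates separately, in each instance by exhibiting an explicit word in $y$ and $\tau$ whose order or Jordan type is incompatible with the putative overgroup, in the spirit of Lemmas \ref{mono} and \ref{irr}. For the monomial case I would argue through the unipotent element $\tau$. Relative to an orthonormal basis a monomial subgroup consists of signed permutation matrices, and the odd part of the order of such a matrix is the least common multiple of its cycle lengths, hence at most that of the parts of a partition of $9$. Now $\tau_{|S_9}$ has order $p$ and minimal polynomial $(t-1)^3$. If $p\geq 5$, an order-$p$ signed permutation on $9$ points must contain a genuine $p$-cycle, giving a Jordan block of size $p>3$ and contradicting $(t-1)^3$; if $p\geq 11$ such a $p$-cycle cannot even fit on $9$ points. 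Thus only $p=3$ can survive, and there I would run the line-fixing argument of Lemma \ref{mono}: since $y$ has order $3$ and no eigenvalue $-1$, it permutes the orthonormal lines $\langle v_i\rangle$ in fixed points and $3$-cycles, so some $v_i$ may be taken in $V_1(y)$, after which a short word in $y,\tau$ forces an inner product $v_i^{\T} J v_j$ to take a value forbidden for an orthonormal frame.

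For the five groups in class $\mathcal{S}$ the decisive feature is that each is finite with only small prime divisors, and in particular has maximal element order at most $30$: namely $9$ for $\PSL_2(8)$, $17$ for $\PSL_2(17)$, $21$ for $\Alt(10)$, and $30$ for $\Sym(10)$ and $\Sym(11)$. I would therefore produce a word $g=g(y,\tau)$ — for instance $y\tau$, $[y,\tau]$, or a small power thereof — and show that its order exceeds $30$ (equivalently, that its order is divisible by a prime $r\geq 13$ with $r\neq 17$, which excludes all five simultaneously). Since $K_{|S_9}$ is absolutely irreducible by Lemma \ref{irr} while each candidate has fixed size, such an order bound holds for all but finitely many $q$; a determinant-type computation analogous to the one in the proof of Lemma \ref{irr} would make the finitely many exceptional values of $a$ explicit.

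The main obstacle is precisely these small fields, together with the residual case $p=3$. There a candidate $\mathcal{S}$-subgroup can exhibit an element order matching that of every short word, so the crude order bound fails and one must fall back on finer invariants — the trace or characteristic polynomial of a fixed word, or the precise Jordan type of the unipotent $\tau$ — and rule out the finitely many admissible pairs $(p,a)$ with $\F_p[a]=\F_q$ one at a time, by an explicit order or trace computation exactly as was done for the sporadic $(n,q)$ in Lemmas \ref{mono} and \ref{deleted}.
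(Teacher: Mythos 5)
Your proposal follows the same skeleton as the paper's proof: exploit the fact that $\tau$ is unipotent of order $p$ and that each candidate overgroup has bounded order to reduce to finitely many pairs $(q,a)$, then eliminate each survivor by exhibiting a word in $y,\tau$ whose order is incompatible with the overgroup. Within that skeleton, your Jordan-block argument for the monomial case with $p\geq 5$ is correct and is a genuine improvement on the paper: an element of odd prime order $p$ in a group of signed permutation matrices on $9$ points must involve a $p$-cycle (with sign product $1$), hence in characteristic $p$ it has a Jordan block of size $p>3$, contradicting the minimal polynomial $(t-1)^3$ of $\tau_{|S_9}$; the paper instead disposes of the monomial cases $q=5,7$ by explicit order computations.

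There are, however, genuine gaps. First, your reduction to finitely many $q$ for the five $\mathcal{S}$-candidates is asserted rather than proved: absolute irreducibility of $K_{|S_9}$ together with ``each candidate has fixed size'' does not by itself show that some short word has order exceeding $30$ for all but finitely many $q$. The paper gets this reduction for free from the conditions on $q$ in \cite[Tables 8.58 and 8.59]{Ho} under which these subgroups are maximal in $\Omega_9(q)$ (together with $p$ dividing $|M|$), yielding exactly the cases $q\in\{3,5,7\}$ (monomial), $q\in\{3,7\}$ ($\Alt(10)$), $q=9$ ($\PSL_2(17)$), $q=11$ ($\Sym(11)$), $q=27$ ($\PSL_2(8)$). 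Second, and decisively, the residual cases \emph{are} the entire content of the proof, and you do not carry them out; moreover, your hope that one uniform word such as $y\tau$ or $[y,\tau]$ works is false. The paper needs $g_j=y\tau^j$ with $j$ depending not only on $q$ but on $a$ (for $q=7$: $j=2$ when $a=\pm 2$, $j=3$ when $a\in\{\pm1,\pm3\}$; for $q=11$: $j=2$ when $a=\pm5$, $j=1$ otherwise), and the key facts, e.g.\ that $y\tau$ has order divisible by $41$ when $q\in\{3,9\}$, are verified computations, not consequences of the general setup. Third, your fallback for the monomial case with $p=3$ is the step most likely to fail: the line-fixing argument of Lemma \ref{mono} relied on the transitivity statement of Lemma \ref{transitivity} and on the specific generators of Section \ref{11-17}, neither of which is available for $K_{|S_9}=\langle y,\tau\rangle_{|S_9}$; and counting alone cannot conclude, since $\tau$ fixing three of the lines $\langle v_i\rangle$ pointwise and permuting the remaining six in two $3$-cycles is perfectly consistent with $\dim V_1(\tau_{|S_9})=5$. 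The paper closes $q=3$ precisely by the order-$41$ computation, which your outline leaves undone.
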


\begin{proof}
Recall that $\tau$ is an element of order $p$. Considering the order of the maximal
subgroups $M$ described in the statement and the conditions on $q$ given in \cite[Tables 8.58 and 8.59]{Ho},
we may  reduce to the following cases:
\begin{itemize}
\item[(i)] $M=2^8: \Alt(9)$ and $q\in \{3,5\}$;
\item[(ii)] $M=2^8: \Sym(9)$ and $q=7$;
\item[(iii)] $M=\Alt(10)$ and  $q \in \{3,7\}$;
\item[(iv)] $M=\PSL_2(17)$ and $q=9$;
\item[(v)] $M=\Sym(11)$ and $q=11$;
\item[(vi)] $M=\PSL_2(8)$ and $q=27$.
\end{itemize}
Now, we look for an element of $H$ whose order does not divide $|M|$.
In particular, it suffices to find an element of $H$ whose order is divisible by a prime $\varrho >17$ in case
(iv), $\varrho >11$ otherwise.
Define $g_j=y\tau^j$.
If $q\in \{3,9\}$, then $g_1$ has order divisible by $41$.
If $q=5$, then $g_3$ has order which is divisible by a prime $\varrho\geq 13$.
If $q=7$, take $j=2$  when $a=\pm 2$, and $j=3$ when $a\in \{\pm 1, \pm 3 \}$.
Then, the order of $g_j$ is divisible by a prime $\varrho\geq 43$.
If $q=11$, take $j=2$ if $a= \pm 5$ and $j=1$ otherwise. Then the order of $g_j$ is divisible by a prime $\varrho \geq 19$.
Finally, if $q=27$ then $g_2$ has order divisible by $37$.
In all these cases, we easily obtain a contradiction.
\end{proof}

For the next lemma, we will use the following traces of elements of  $K_{|S_9}$:
\begin{equation}\label{tr1}
\tr\left((y\tau)^2\right) = -2176 a^4 + 128 a^2, \qquad 
\tr\left((y^2\tau)^2\right) = 1920 a^4 + 128 a^2.
\end{equation}

\begin{lemma}\label{imp}
The group $K_{|S_9}$ is neither contained in a maximal subgroup in class $\mathcal{C}_2$ of $\Omega_9(q)$
nor contained in any maximal subgroup in class $\mathcal{C}_7$.
\end{lemma}

\begin{proof}
By Lemma \ref{monS} the group $K_{|S_9}$ is not monomial.
So, suppose that $K_{|S_9}$ preserves a nonsingular decomposition $\F_q^9=W_1\oplus W_2\oplus W_3$
with $\dim(W_i)=3$. Clearly, for each $k\in K_{|S_9}$, its cube fixes each $W_i$ preserving a nonsingular
symmetric form. Thus, its eigenvalues are $\pm 1, \alpha_i, \alpha_i^{-1}$.
It follows that $k^3$ must have the eigenvalue $1$ with multiplicity at least $3$, or the eigenvalue
$-1$ with multiplicity at least $2$.
Assume first $p=3$. We have $\chi_{(y\tau)^3}(t)=(t-1)f(t)$, where
$f(t)=t^8 + t^7 -(a^{12} +a^6 - 1) t^6 - (a^{12} - 1) t^5 - (a^6 - 1) t^4
-(a^{12} - 1) t^3 - (a^{12} + a^6 - 1) t^2 + t + 1$.
Then, $f(1)=-a^{12}\neq 0$ and $f(-1)=1$, a contradiction.
Next, assume $p\neq 3$. From $\tr(\tau)=9\neq 0$ we get that $\tau$ fixes each $W_i$. 
By the irreducibility of $K_{|S_9}$ the element $y$ acts on $\{W_1,W_2,W_3\}$ as the $3$-cycle $(W_1,W_2,W_3)$.
In this case, both $(y\tau)^2$ and  $(y^2\tau)^2$ should have trace $0$, in contrast
with \eqref{tr1} which gives $0=\tr((y^2\tau)^2)-\tr((y\tau)^2)=2^{12}a^4$.

Finally, suppose that $K_{|S_9}$ is contained in a maximal subgroup $M\cong \Omega_3(q)^2.[4]\in \mathcal{C}_7$, hence
actually in $\Omega_3(q)^2$.
Up to conjugation, we may suppose
$\tau=\begin{pmatrix}
1 & 1 & 1 \\
0 & 1 & 2 \\
0 & 0 & 1 
    \end{pmatrix}
\otimes
\begin{pmatrix}
1 & 1 & 1 \\
0 & 1 & 2 \\
0 & 0 & 1 
    \end{pmatrix}$.
The dimensions of the fixed point space of this tensor product and of $\tau$ are, respectively, $3$ and
$5$, a contradiction.
\end{proof}

\begin{lemma}\label{L2}
The group $K_{|S_9}$ is not contained in any maximal subgroup 
$M\cong \PSL_2(q).2$ or $M\cong \PSL_2(q^2).2$ in class $\mathcal{S}$ of $\Omega_9(q)$. 
\end{lemma}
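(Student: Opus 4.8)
We have a 9-dimensional orthogonal group $\Omega_9(q)$ and a subgroup $K_{|S_9} = \langle y, \tau \rangle$ acting on $S_9 = \F_q^9$. We've already established (via the preceding lemmas) that this group is absolutely irreducible, primitive, tensor indecomposable, not monomial, and not contained in various $\mathcal{S}$-class subgroups. The final statement (Lemma \ref{L2}) is the next step in ruling out maximal subgroups: we need to show $K_{|S_9}$ is not contained in $M \cong \PSL_2(q).2$ or $M \cong \PSL_2(q^2).2$ in class $\mathcal{S}$.

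**Key facts about these subgroups.** These $\PSL_2$-type subgroups arise from the 9-dimensional representation of $\SL_2$ (or $\PSL_2$). Since $9 = 2\cdot 4 + 1$, the relevant representation is the symmetric 8th power (degree 9) of the natural 2-dimensional module, i.e., $\mathrm{Sym}^8$ of the natural representation. An element of $\PSL_2(q)$ of order $p$ corresponds to a unipotent element, which in the $\mathrm{Sym}^8$ representation is a single Jordan block of size 9 (when $p > 8$, i.e., $p \geq 11$) or has a more constrained Jordan structure when $p$ is small.

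Let me reconsider. The key constraint I'd exploit is the Jordan structure of $\tau$.

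**Structure of the proof.** The plan is to use the element $\tau$, which has order $p$ and — crucially, as computed in Section \ref{gen} — has minimal polynomial $(t-1)^3$ and fixed-point space of dimension $5$ (generically) on $S_9$. This means $\tau$ is a unipotent element whose Jordan form consists of blocks of size at most $3$, with exactly $4 = 9 - 5$ as the rank of $\tau - \I$. So the Jordan type of $\tau$ is $(3,3,1,1,1)$ or $(3,2,2,1,1)$ — in any case, NOT a single Jordan block of size $9$, and more importantly it has several Jordan blocks.

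\begin{proof}[Proof sketch]
First I would recall that in a maximal subgroup $M \cong \PSL_2(q).2$ or $\PSL_2(q^2).2$ of class $\mathcal{S}$, the group $\Omega_9(q)$ arises through the embedding afforded by the $9$-dimensional irreducible module $\mathrm{Sym}^8(W)$, where $W$ is the natural $2$-dimensional module for the relevant $\SL_2$. The essential point is to analyze the image of a unipotent element of order $p$: in $\PSL_2$ such an element is a single root-group element, and its image in $\mathrm{Sym}^8(W)$ has a completely determined Jordan structure. For $p \geq 11$ the image is a single Jordan block of size $9$, so its fixed-point space on $S_9$ is $1$-dimensional. For the small primes $p \in \{3,5,7\}$ the Jordan decomposition breaks into smaller blocks dictated by the representation theory of $\SL_2$ in defining characteristic, but in every case the dimension of the fixed space of such a unipotent element is at most $\lceil 9/p \rceil$.

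\end{proof}

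I would then contradict this with the known structure of $\tau$: by the computation at the end of Section \ref{gen}, $\tau$ has order $p$, minimal polynomial $(t-1)^3$, and a fixed-point space of dimension $5$ (or $7$ in the exceptional case $n \in \{12,16,20\}$, $a^2 = 3$). A fixed space of dimension $5$ forces $\tau$ to have at least $5$ Jordan blocks, whereas an element of order $p$ inside $\PSL_2(q).2$ acting on $\mathrm{Sym}^8(W)$ cannot have a $5$-dimensional fixed space unless $p = 3$ (where $\lceil 9/3 \rceil = 3$ still fails) — so the mismatch between $\dim V_1(\tau) = 5$ and the maximum fixed-space dimension of a $p$-element in $M$ gives the contradiction directly, provided $\tau \neq \I$. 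The main obstacle I anticipate is handling the small characteristics $p = 3, 5, 7$ carefully, since there the tidy ``single Jordan block'' picture fails and one must invoke the explicit $\SL_2$-module Jordan block decomposition (or the Brauer character / fixed-point bounds in \cite{Ho,KL}); one must verify that even the maximal possible fixed-space dimension of a $p$-element in these subgroups stays strictly below $5$, and separately rule out the exceptional configuration $a^2 = 3$ where $\dim V_1(\tau) = 7$, which only makes the contradiction sharper.
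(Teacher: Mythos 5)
Your core strategy coincides with the paper's: both arguments pit the invariant $\dim V_1(\tau)=5$, for the order-$p$ element $\tau$, against the fixed-space dimension of an order-$p$ element of $M$, computed in the module that defines the embedding. But there is a genuine gap in your treatment of the second case. The subgroup $M\cong\PSL_2(q^2).2$ of $\Omega_9(q)$ does \emph{not} arise from $\mathrm{Sym}^8$ of the natural module of $\SL_2(q^2)$; that module would produce a subgroup of $\Omega_9(q^2)$, not a $9$-dimensional group over $\F_q$. As the paper's proof shows, $M$ arises from the action of $\SL_2(q^2)$, through the map $\psi$ of \eqref{Om3}, on the $9$-dimensional $\F_q$-space of Hermitian $3\times 3$ matrices over $\F_{q^2}$; over the algebraic closure $\F$ this is the twisted tensor product $L(2)\otimes L(2)^{(q)}$ of the $3$-dimensional module with its Frobenius twist. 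In this module a unipotent element acts as the tensor product of two Jordan blocks of size $3$, whose Jordan type is $(5,3,1)$ when $p\geq 5$ and $(3,3,3)$ when $p=3$: its fixed space has dimension $3$ for \emph{every} $p$, not $1$. The contradiction $3<5$ survives, but your argument, which predicts a single block of size $9$ and a $1$-dimensional fixed space for $p\geq 11$, does not establish it; the computation in the correct module is exactly what the paper's proof of this case consists of.

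A second, related problem is that the inequality you invoke for small characteristic --- that the fixed space of a $p$-element on the $9$-dimensional module has dimension at most $\lceil 9/p\rceil$ --- is backwards as a general principle. Since every Jordan block of an element of order $p$ has size at most $p$, the number of blocks (which \emph{equals} the fixed-space dimension) is at least $\lceil 9/p\rceil$; there is no a priori upper bound of this shape, and the Hermitian module above violates it, since $3>\lceil 9/p\rceil$ for $p\geq 5$. For $\mathrm{Sym}^8$ itself equality does hold (Jordan types $(3,3,3)$, $(5,4)$, $(7,2)$, $(9)$ for $p=3,5,7,\geq 11$ respectively), but this must be verified by computation rather than quoted as a general fact. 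In fact, for the case $M\cong\PSL_2(q).2$ the small-prime worry is vacuous: for $p=5,7$ the module $\mathrm{Sym}^8$ is reducible, and for $p=3$ it equals $L(2)\otimes L(2)^{(3)}$, so the image of $\PSL_2(q)$ lies in a reducible or tensor-decomposable (class $\mathcal{C}_4$) subgroup; hence the $\mathcal{S}$-maximal $\PSL_2(q).2$ occurs only for $p\geq 11$, where your single-block argument and the paper's explicit computation of the fixed space $\langle t_1^8\rangle$ agree.
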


\begin{proof}
Suppose the contrary. 

\noindent \textbf{Case $M\cong \PSL_2(q).2$.} In this case, $M$ arises from the representation
$\Phi: \GL_2(q)\to \GL_9(q)$ obtained from the action of $\GL_2(q)$ on the space $T$
of homogeneous polynomials of degree $8$ in two variables $t_1, t_2$ over $\F_q$. 
Up to conjugation in $\GL_2(q)$, we may assume
$$\tau=\Phi(\I_2+E_{1,2})=\left\{
\begin{array}{rcl}
t_1 & \mapsto & t_1,\\
t_2 & \mapsto & t_1+t_2.
\end{array}\right.$$
Direct computation (with respect to the basis $t_1^8, t_1^7t_2, \ldots, t_2^8$ of $T$)
gives that the fixed point space of this linear transformation is generated by $t_1^8$. 
So, it has dimension $1$, a contradiction as $\tau$ has a fixed point space of dimension $5$.

\noindent \textbf{Case $M\cong \PSL_2(q^2).2$.}  
To understand $M$, start from the representation $\psi:\GL_2(q^2)\to \GL_3(q^2)$ described in \eqref{Om3}. 
Next, consider the subspace $W$ of $\Mat_3(q^2)$ consisting of the matrices $A$ such that $A^\T=(a_{i,j}^q)=A^\sigma$.
Clearly,  $W$ has dimension $9$ over $\F_q$ and we may consider the representation $\Phi: \GL_3(q^2)\to \GL_9(q)$
induced by  $A\mapsto (\psi(g))^\T A (\psi(g))^\sigma$ for all $g\in\GL_3(q^2)$.
The group $M$ arises from this representation. Again, up to conjugation in $\GL_2(q^2)$, we may suppose
$\tau=\Phi(\psi(\I_2+E_{1,2}))=\Phi\left(\begin{pmatrix}
1 & 1 & 1 \\
0 & 1 & 2 \\
0 & 0 & 1
\end{pmatrix} \right)$.
Direct calculation gives that the fixed point space of $\Phi(\psi(\I_2+E_{1,2}))$ on  $W\leq \Mat_3(q^2)$ is generated
by $E_{2,2}, E_{3,3}, E_{2,3}+E_{3,2}$.
Thus, it has dimension $3$, again a contradiction  as $\tau$ has a fixed point space of dimension $5$.
\end{proof}

\begin{proposition}\label{Om9-K9}
Suppose $q$ odd and $n\in\{  15, 18,19 \}$ or $n\geq 21$. 
Let $a \in \F_q^*$ be such that $\F_p[a]=\F_q$. Then, $K_{|S_9}=\Omega_9(q)$.
\end{proposition}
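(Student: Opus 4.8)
The plan is to identify $K_{|S_9} = \langle y, \tau\rangle|_{S_9}$ as all of $\Omega_9(q)$ by eliminating every Aschbacher class of maximal subgroups, using the preparatory lemmas already established for this restricted group. The skeleton of the argument is the standard one for applications of Aschbacher's theorem combined with \cite[Theorem 7.1]{GS}: first one checks that $K_{|S_9}$ lies genuinely in $\Omega_9(q)$ (which follows because $y_2$ and $\tau$ both preserve $J$ and lie in $\Omega_9(q)$, $\tau$ being a unipotent element of order $p$); then one shows it is absolutely irreducible, primitive, tensor indecomposable, not monomial, not realizable over a subfield, and not trapped in any of the geometric or $\mathcal{S}$-type subgroups.

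Concretely, I would first invoke Lemma \ref{irr} to get absolute irreducibility, then Lemma \ref{imp} for primitivity and tensor indecomposability, and Lemma \ref{monS} to rule out the monomial case together with the small sporadic class-$\mathcal{S}$ subgroups $\PSL_2(8)$, $\PSL_2(17)$, $\Alt(10)$, $\Sym(10)$, $\Sym(11)$. Next I would dispose of the $\mathcal{C}_5$ (subfield) subgroups by a trace argument analogous to Lemma \ref{11-C5}: since $\F_p[a]=\F_q$, the traces recorded in \eqref{tr1}, namely $\tr((y\tau)^2)=-2176a^4+128a^2$ and $\tr((y^2\tau)^2)=1920a^4+128a^2$, force $a^4$ and hence $a$ into any subfield $\F_{q_0}$ over which $K_{|S_9}$ could be realized, whence $q_0=q$. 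The remaining serious class-$\mathcal{S}$ obstructions are the families $\PSL_2(q).2$ and $\PSL_2(q^2).2$, which are precisely the content of Lemma \ref{L2}, where the dimension of the fixed space of $\tau$ (equal to $5$ by the computation at the end of Section \ref{gen}) is incompatible with the fixed-space dimensions $1$ and $3$ forced by those embeddings.

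With all of these in hand, I would apply \cite[Theorem 7.1]{GS}. The key structural input is that $\tau$ is a nontrivial unipotent element of order $p$ whose fixed point space on $S_9$ has dimension $5$ (generically), so $\tau$ is an element with a $4$-dimensional support; one must verify this is the sort of "low-rank" element that triggers the \cite{GS} classification, leaving only the possibilities that $K_{|S_9}$ is $\Omega_9(q)$ itself or an alternating/symmetric group acting on a deleted permutation module. The alternating/symmetric possibilities of class $\mathcal{S}$ are exactly what Lemma \ref{monS} was designed to exclude (for the relevant small $q$), while for large $q$ the order considerations rule them out. Combining the exclusion of the subfield case, the imprimitive and tensor cases, and the class-$\mathcal{S}$ cases, the only surviving alternative is $K_{|S_9}=\Omega_9(q)$.

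The main obstacle, and the step requiring the most care, is the interface with \cite[Theorem 7.1]{GS}: one must confirm that a unipotent element of order $p$ with a $4$-dimensional support qualifies under the hypotheses of that theorem (it plays the role here that the bireflection $(xy)^{n-2}$ played in Theorem \ref{th-1137}), and that the short list of outcomes it produces is fully covered by Lemmas \ref{irr}, \ref{monS}, \ref{imp}, \ref{L2} together with the subfield trace argument. A secondary subtlety is the exceptional case $a^2=3$ flagged at the end of Section \ref{gen}, where the fixed space of $\tau_{|S_9}$ jumps to dimension $7$; I would check that either this case does not arise under $\F_p[a]=\F_q$ for the relevant $q$, or that the fixed-space comparison in Lemma \ref{L2} still yields a contradiction, so that the conclusion $K_{|S_9}=\Omega_9(q)$ holds uniformly.
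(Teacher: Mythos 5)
Your overall scaffolding (Lemmas \ref{irr}, \ref{imp}, \ref{monS}, \ref{L2}, then a subfield trace argument) matches the paper's proof, but your pivotal step fails: \cite[Theorem 7.1]{GS} cannot be invoked here. That theorem requires the group to contain a \emph{bireflection}, i.e., an element $g$ with $\dim[V,g]\leq 2$; on $S_9$ the unipotent element $\tau$ has fixed point space of dimension $5$, so $\dim[S_9,\tau]=4$, and your own ``main obstacle'' --- verifying that an element of $4$-dimensional support triggers the \cite{GS} classification --- is exactly where the argument breaks, since it does not. The paper never uses \cite{GS} for this proposition. Instead, because the dimension is $9\leq 12$, it appeals to the complete classification of maximal subgroups of $\Omega_9(q)$ in \cite[Tables 8.58 and 8.59]{Ho}: this is precisely why Lemmas \ref{monS} and \ref{L2} enumerate the specific class-$\mathcal{S}$ subgroups $\PSL_2(8)$, $\PSL_2(17)$, $\Alt(10)$, $\Sym(10)$, $\Sym(11)$, $\PSL_2(q).2$, $\PSL_2(q^2).2$ occurring in those tables. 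With irreducibility, primitivity, tensor indecomposability, non-monomiality and those $\mathcal{S}$-cases excluded, the tables leave only the $\mathcal{C}_5$ subgroups $\Omega_9(q_0)$ and $\SO_9(q_0)$; no deleted permutation modules ever enter the discussion. Without this appeal to \cite{Ho} (or some substitute for the missing bireflection), your list of lemmas does not yield the dichotomy you claim.

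Second, your $\mathcal{C}_5$ argument is incomplete. The two traces in \eqref{tr1} give $17a^4-a^2\in\F_{q_0}$ and $15a^4+a^2\in\F_{q_0}$, hence $32a^4\in\F_{q_0}$ and then $a^2\in\F_{q_0}$ --- but \emph{not} $a\in\F_{q_0}$: since only even powers of $a$ appear, the case $q=q_0^2$ with $a\notin\F_{q_0}$ but $a^2\in\F_{q_0}$ survives, and such $a$ satisfying $\F_p[a]=\F_q$ do exist (take $q=p^2$ and $a$ a square root of a nonsquare of $\F_p$). The paper closes this hole with a third trace,
$$\tr\left( y^2\tau^2  (y\tau)^2\right)=-49152 a^6 + 16384 a^5 + 3840 a^4 + 256 a^2\in\F_{q_0},$$
whose odd-degree term forces $a^5\in\F_{q_0}$ and hence $a=a^5/a^4\in\F_{q_0}$, giving $q_0=q$. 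You would need to add such a computation. Finally, your ``secondary subtlety'' about $a^2=3$ is vacuous here: the jump of the fixed space of $\tau_{|S_9}$ to dimension $7$ occurs only for $n\in\{12,16,20\}$, which this proposition excludes.
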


\begin{proof}
By Lemmas \ref{irr} and \ref{imp}, $K_{|S_9}$ is absolutely irreducible,
and is neither contained in a maximal subgroup in class $\mathcal{C}_2$ of $\Omega_9(q)$
nor contained in any maximal subgroup in class $\mathcal{C}_7$.
Furthermore, by Lemmas \ref{monS} and \ref{L2}  either $K_{|S_9}=\Omega_9(q)$ or $K_{|S_9}$ is contained in a maximal subgroup
$M\in \{\Omega_9(q_0), \SO_9(q_0)\}$  in class $\mathcal{C}_5$, where $q=q_0^r$ for some prime $r\geq 2$.
Suppose there exists $g\in \GL_{9}(\F)$ such that
$\tau^g=\tau_0$, $y^g= y_0$, with $\tau_0,y_0\in \GL_{9}(q_0)$.
From  $ -2176 a^4 + 128 a^2 = \tr\left((y\tau)^2\right)= \tr\left((y^g\tau^g)^2 \right)= \tr\left((y_0\tau_0)^2\right)$, 
it follows that $17 a^4- a^2  \in \F_{q_0}$.
Similarly, from $\tr\left((y^2\tau)^2\right) = 1920 a^4 + 128 a^2$, we obtain  
$15 a^4+a^2 \in \F_{q_0}$.
It follows that $32a^4\in \F_{q_0}$ and then $a^2 \in \F_{q_0}$.
Again, from $\tr\left( y^2\tau^2  (y\tau)^2\right)=
-49152 a^6 + 16384 a^5 + 3840 a^4 + 256 a^2 \in \F_{q_0}$, we get 
$a\in\F_{q_0}$. 
So,  $\F_q=\F_p[a]\leq \F_{q_0}$ implies $q_0=q$, a contradiction.
We conclude that  $K_{|S_9}=\Omega_9(q)$.
\end{proof}

Define $E_0=S_0=\{0\}$  and, for $1\leq \ell\leq n$, 
$$E_\ell= \left\langle e_i\mid 1\leq i\leq \ell\right\rangle \equad S_\ell= \left\langle e_i\mid n-\ell+1\leq i\leq 
n\right\rangle.$$

\begin{corollary}\label{cor_gen}
Suppose $q$ odd and $n\in\{  15, 18,19 \}$ or $n\geq 21$. Let $a \in \F_q^*$ be such that $\F_p[a]=\F_q$. Then,
\begin{itemize}
\item[(i)]   $H=\Omega_n (q)$ if $n$ is odd;
\item[(ii)]  $H=\Omega_n^+(q)$ if $q\equiv 1 \pmod 4$ and $n$ is even;
\item[(iii)] $H=\Omega_n^+(q)$ if $q\equiv 3 \pmod 4$ and $n\equiv 0 \pmod 4$;
\item[(iv)] $H=\Omega_n^-(q)$ if $q\equiv 3 \pmod 4$ and $n\equiv 2 \pmod 4$.
\end{itemize}
\end{corollary}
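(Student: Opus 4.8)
The plan is to mirror the proof of Theorem \ref{th-1137}: show that $H$ is absolutely irreducible, primitive, tensor indecomposable and not contained in a subfield subgroup, exhibit a bireflection in $H$, and then invoke \cite[Theorem 7.1]{GS} to force $H=\Omega_n^\epsilon(q)$, reading off the type $\epsilon$ from the Gram matrix $J$. The decisive new input is the localized classical subgroup furnished by Proposition \ref{Om9-K9}. Since $y$ and $\tau$ both preserve the orthogonal decomposition $V=E_{n-9}\operp S_9$ -- acting on $E_{n-9}$ through the permutation $\nu_3$, and trivially there in the case of $\tau$ -- the kernel $K_0$ of the restriction $K\to K_{|E_{n-9}}$ fixes $E_{n-9}$ pointwise; as $\tau\in K_0$ acts nontrivially on $S_9$ and $\Omega_9(q)$ is simple, Proposition \ref{Om9-K9} gives $(K_0)_{|S_9}=\Omega_9(q)$. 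Thus $H$ contains a copy of $\Omega_9(q)$ acting on the nondegenerate subspace $S_9$ and fixing $E_{n-9}$ pointwise, whence $[V,K_0]\subseteq S_9$ has dimension at most $9$.

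First I would prove absolute irreducibility. Let $U\neq 0$ be an $H$-invariant subspace, viewed over $\F$. The orthogonal projection $\pi\colon V\to S_9$ is $K_0$-equivariant, so $\pi(U)$ is a $K_0$-submodule of the absolutely irreducible module $S_9$, giving $\pi(U)\in\{0,S_9\}$; since $E_{n-9}$ is a trivial $K_0$-module we have $\mathrm{Hom}_{K_0}(S_9,E_{n-9})=0$, so either $U\subseteq E_{n-9}$ or $S_9\subseteq U$. The last transposition of $\nu_2$ is $(e_{n-9},e_{n-8})$, because $3m+r=n-9$, and it bridges the two summands: $xe_{n-8}=e_{n-9}$. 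Hence if $S_9\subseteq U$ then $e_{n-9}\in U$, and the permutations $\nu_1,\nu_2,\nu_3$ spread this to every $e_i$ with $i\le n-9$ (a transitivity statement of the type proved in Lemma \ref{transitivity}), giving $U=V$. Conversely, if $U\subseteq E_{n-9}$, the same bridge together with the $3$-cycle $(e_{n-11},e_{n-10},e_{n-9})$ of $\nu_3$ forces the descent $U\subseteq E_{n-10}\subseteq E_{n-11}\subseteq\cdots$, whence $U=0$. Therefore $H$ is absolutely irreducible.

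The remaining Aschbacher classes are handled through $K_0$. For imprimitivity, the orbits of $K_0$ on any block system have length at most $\dim[V,K_0]\le 9$, which is below the minimal degree of a nontrivial permutation representation of $\Omega_9(q)$; hence $K_0$ stabilizes every block, and then the $K_0$-irreducibility of $S_9$ combined with the bridge $(e_{n-9},e_{n-8})$ yields a contradiction as in Lemma \ref{imp}. For a tensor decomposition $V=V_1\otimes V_2$, simplicity of $\Omega_9(q)$ makes one factor $K_0$-trivial, so $K_0$ acts on the other factor with commutator space of dimension at most $4$, forcing a faithful representation of $\Omega_9(q)$ of degree at most $4$, which is impossible. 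The subfield class $\mathcal{C}_5$ is excluded exactly as in Proposition \ref{Om9-K9}: the elements $(y\tau)^2$, $(y^2\tau)^2$ and $y^2\tau^2(y\tau)^2$ lie in $H$, and their traces on $V$ exceed the values in \eqref{tr1} by the same integer $r$ (the number of points of $\C$ fixed by $\nu_3$), so they still generate $\F_q=\F_p[a]$ and no proper subfield can contain all of them. Finally, any element of $K_0$ inducing a product of two reflections on $S_9$ moves only a $2$-dimensional subspace of $V$, so $H$ contains a bireflection.

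With these four properties established, \cite[Theorem 7.1]{GS}, combined with the $\mathcal{C}_5$ step, leaves exactly two possibilities as in Theorem \ref{th-1137}: either $H=\Omega_n^\epsilon(q)$, or $H$ is an alternating or symmetric group on a deleted permutation module of degree $n+1$ or $n+2$. The latter is excluded by the $\mathcal{C}_5$ argument when $q>p$, and for $q=p$ by exhibiting an element of $H$ whose order has a prime divisor exceeding $n+2$, as in Lemma \ref{deleted} (the large semisimple elements of $K_0\cong\Omega_9(q)$ readily supply such orders). Hence $H=\Omega_n^\epsilon(q)$, and it only remains to identify $\epsilon$: for $n$ odd, $\epsilon=\circ$; for $n$ even, $\det(J)=1$ and the criterion recalled in Section \ref{prel} gives type $+$ precisely when $(-1)^{n/2}$ is a square in $\F_q^\ast$, which yields $\epsilon=+$ in cases (ii) and (iii) and $\epsilon=-$ in case (iv). The step I expect to be most delicate is making the primitivity and tensor-indecomposability arguments uniform across the whole range of $n$: unlike the finitely many groups treated in Section \ref{11-17}, one cannot proceed by element orders alone, and must instead exploit the structural invariants (simplicity, minimal permutation degree, minimal faithful dimension) of the localized subgroup $K_0\cong\Omega_9(q)$, together with the bound $\dim[V,K_0]\le 9$.
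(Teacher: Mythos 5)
Your proposal takes a genuinely different route from the paper, and it contains a step that fails. The paper never applies \cite[Theorem 7.1]{GS} to $H$ at all: once Proposition \ref{Om9-K9} gives $\diag(\I_{n-9},\Omega_9(q))=K'\leq H$, it takes $\ell$ maximal with $K_\ell:=\diag(\I_{n-\ell},\Omega^{\epsilon}_\ell(q))\leq H$ and shows $\ell=n$ by an inductive step, using a conjugate $K_\ell^g$ ($g\in\{x,y\}$) together with the maximality of the stabilizer $\langle \rho,K_\ell\rangle$ of a nondegenerate $1$-space inside $\diag(\I_{n-\ell-1},\Omega_{\ell+1}^{\overline\epsilon}(q))$, plus four exceptional $(r,\ell)$ configurations; no irreducibility, primitivity, tensor or subfield analysis of $H$ is ever needed. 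In your GS-based plan, the fatal step is the exclusion of $H\cong\Alt(\ell)$ or $\Sym(\ell)$ ($\ell=n+1,n+2$) on the deleted permutation module when $q=p$: you propose to find an element of $K_0\cong\Omega_9(p)$ whose order has a prime divisor exceeding $n+2$. But $K_0$ is a \emph{fixed} finite group once $p$ is fixed, so its element orders and their prime divisors are bounded independently of $n$ (for $p=3$ the largest prime dividing $|\Omega_9(3)|$ is $41$); for all large $n$ every element order of $\Omega_9(p)$ is already an element order of $\Alt(n+1)$, so no order argument confined to $K_0$ can ever succeed. A correct replacement must use the module structure, e.g.: $K_0$ has fixed-point space of codimension $9$ on $V$, so as a subgroup of $\Sym(\ell)$ acting on the deleted permutation module it would have at least $\ell-11$ orbits on the $\ell$ points, hence would move at most $22$ of them, embedding $\Omega_9(p)$ into $\Sym(22)$; this is impossible because the minimal faithful permutation degree of $\Omega_9(p)$ is in the thousands (note that a crude order comparison is \emph{not} enough, since $|\Omega_9(3)|<22!$).

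A second, patchable, flaw is the tensor-indecomposability step. ``Simplicity of $\Omega_9(q)$ makes one factor $K_0$-trivial'' is not a valid inference: a simple group can sit in a central product $\GL(V_1)\circ\GL(V_2)$ with \emph{both} projective factors faithful (e.g.\ diagonally, natural $\otimes$ natural), and simplicity does nothing to exclude this. What does exclude it is the commutator bound you already have: $\tau\in K_0$ satisfies $\dim [V,\tau]=4$, whereas any $g_1\otimes g_2$ with both $g_i$ non-scalar has $\dim[V,g_1\otimes g_2]\geq \max(\dim V_1,\dim V_2)\geq 5$ for every factorization $n=\dim V_1\cdot\dim V_2$ with both factors $\geq 2$ and $n\in\{15,18,19\}$ or $n\geq 21$; and if one factor is scalar (hence trivial, $K_0$ being perfect), then $[V,K_0]=V_1\otimes[V_2,K_0]$ has dimension at least $2\cdot 9$, contradicting $\dim[V,K_0]=9$. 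Your remaining steps (the construction of $K_0$, irreducibility via the bridge $xe_{n-8}=e_{n-9}$ and descent, primitivity, the subfield/trace argument, the bireflection, and the identification of the type $\epsilon$) are essentially sound, but as written the proof does not go through.
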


\begin{proof} 
By \cite[Proposition 1.5.42(ii)]{Ho}, when $n$ is even, we have $H\leq \Omega_n^+(q)$ or $H\leq \Omega_n^-(q)$ according as
$\frac{n(q-1)}{4}$ is even or odd, respectively. Let $\ell$ be maximal with respect to
$$K_\ell:=\diag(\I_{n-\ell}, \Omega_{\ell}^\epsilon (q))\leq H,$$
where $\epsilon \in \{\circ,\pm\}$.
Noting that $K'=\diag(\I_{n-9}, \Omega_9(q))$ by the previous proposition,
we have that $\ell$ is at least $9$ and we need to show that $\ell=n$.
For the sake of contradiction, assume $9\leq \ell< n$.

Suppose first that $(r,\ell)\not \in \{(2,n-2), (2,n-1) \}$, and $(r,\ell)\not \in \{(1,n-4),(2,n-8)\}$ when $n$ is even. Then,
\begin{itemize}
\item[{\rm (a)}] if $\ell \equiv 0 \pmod 3$, then $x$ fixes the subspaces $S_{\ell-1}$ and $E_{n-\ell-1}$, and acts as the transposition 
$(e_{n-\ell},e_{n-\ell+1})$ on $\langle e_{n-\ell}, e_{n-\ell+1} \rangle$;
\item[{\rm (b)}] if $\ell \equiv j \pmod 3$, with $j=1,2$, then $y$ fixes the subspaces $S_{\ell-j}$ and $E_{n-\ell-3 +j}$, 
and acts as the cycle 
$\left(e_{n-\ell-2+j},e_{n-\ell-1+j}, e_{n-\ell+j}\right)$ on $\langle e_{n-\ell-2+j},e_{n-\ell-1+j}, e_{n-\ell+j}\rangle$.
\end{itemize}

Setting $g=x$ in case (a), $g=y$ in case (b), we claim that
$K_{\ell +1}:=\left\langle K_\ell, K_{\ell}^g \right\rangle$ equals 
\begin{equation}\label{ell+1}
\diag(\I_{n-\ell-1}, \Omega_{\ell+1}^{\overline{\epsilon}}(q)),\quad \overline{\epsilon}\in \left\{\circ, \pm \right\}.
\end{equation}
Noting  that $g^{-1}S_\ell$ is obtained from $S_\ell$ replacing $e_{n-\ell+1}$ by $e_{n-\ell}$,
one gets $\langle S_\ell,  g^{-1} S_\ell\rangle = S_{\ell +1}$.
Thus, $K_{\ell+1}$ fixes $S_{\ell+1}$,  induces the identity on $E_{n-\ell-1}$
and fixes the restriction of $J$ to $S_{\ell+1}$, of determinant $1$.
If follows that $K_{\ell+1}$ is contained in the group \eqref{ell+1}.
Call $\rho$ the matrix in $\GL_n(q)$ which acts according to
$e_{n-\ell}\mapsto -e_{n-\ell}$, $e_{n-4}\mapsto -2e_n$, $e_n\mapsto -\frac{1}{2} e_{n-4}$ and
fixes the remaining vectors $e_i$.
Since $\rho$ has determinant $1$ and spinor norm $(\F_q^*)^2$, it belongs to $K_\ell^g$ which induces 
$\Omega_{\ell}^\epsilon$ on $g^{-1}S_\ell$.
Now, $\langle \rho, K_\ell\rangle$ is the stabilizer in the group \eqref{ell+1} of the nondegenerate subspace $\langle e_{n-\ell}\rangle$.
So, it is a maximal subgroup of the group \eqref{ell+1}. From $K_{\ell+1}\nleq \langle \rho, K_\ell\rangle$, we get the final contradiction 
$K_{\ell +1}=\diag(\I_{n-\ell-1}, \Omega_{\ell+1}^{\overline{\epsilon}}(q))$.

It remains to exclude the exceptional cases: in each of them, we get the same contradiction.

\noindent\textbf{Case 1.} $r=1$, $\ell=n-4$, $n$ even. Let $R$ be the stabilizer of $e_6$ in $K_{n-4}$.
Then, $\langle R^x, K_{n-4}\rangle = K_{n-3}$, as it fixes the vectors $e_1,e_2,e_3$ and the subspace $E_3^\perp$, inducing $\Omega_{n-3}(q)$.

\noindent\textbf{Case 2.} $r=2$, $\ell=n-8$, $n$ even. Let $R$ be the stabilizer of $e_{10}$ in $K_{n-8}$.
Then, $\langle R^x, K_{n-8}\rangle = K_{n-7}$, as it fixes the vectors $e_1,e_2,\ldots,e_7$ and  the subspace $E_7^\perp$, inducing
$\Omega_{n-7}(q)$.

\noindent\textbf{Case 3.} $r=2$, $\ell=n-2$. Let $R$ be the stabilizer of $e_{3}$ in $K_{n-2}$.
Then, $\langle R^x, K_{n-2}\rangle = K_{n-1}$, as it fixes  $e_1$  and  $E_1^\perp$, inducing $\Omega^{\overline \epsilon}_{n-1}(q)$.

\noindent\textbf{Case 4.} $r=2$, $\ell=n-1$. Similar to the above cases.
\end{proof}

\section{The case $n\in \{12, 16, 20\}$}\label{16-20}

The values $n=12,16,20$ require some small adjustments with respect to the 
general case, described in Section \ref{gene}. So, in the proof of the following results, we only give the necessary
modifications.

\begin{lemma}\label{irr-1620}
Assume  $a^2\neq 2,3$. 
Then, the group $K_{|S_9}$ is absolutely irreducible.
\end{lemma}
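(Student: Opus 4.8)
The plan is to mimic the proof of Lemma \ref{irr}, since this is its analogue for the exceptional dimensions $n\in\{12,16,20\}$, but to track carefully the one place where those dimensions behave differently. Recall from the end of Section \ref{gen} that for $n\in\{12,16,20\}$ the element $\tau=[x,y]^{24}$ is built from the perturbed matrix $\vartheta=\vartheta_0+8(E_{1,6}+2E_{2,8}+2E_{4,5}-E_{2,5}-2E_{1,8}-2E_{4,6})$, and that the fixed point space of $\tau_{|S_9}$ jumps from dimension $5$ to dimension $7$ precisely when $a^2=3$. The hypothesis $a^2\neq 3$ is therefore exactly what restores the generic behaviour of $\tau$, and $a^2\neq 2$ will be what prevents a determinant from vanishing identically. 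So the strategy is: apply Corollary \ref{complement} to the commutator $g=[y,\tau]$ with $\lambda=1$, conclude that a suitable fixed eigenvector $s$ may be assumed to lie in any proper $K_{|S_9}$-invariant subspace $U$, and then exhibit elements of $K_{|S_9}$ whose images of $s$ span all of $S_9$.

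First I would recompute, for these specific $\tau$, an explicit eigenvector $s\in V_1([y,\tau])$ (the analogue of $s=e_{n-8}-e_{n-7}$, though the perturbation may force a slightly different vector) and verify it is well-defined and nonzero under $a^2\neq 2,3$. Then, exactly as in Lemma \ref{irr}, I would form two $9\times 9$ matrices $M_1,M_2$ whose columns are the images of $s$ under two chosen lists of words in $y$ and $\tau$ (the same families $\I_9,y,y^2,\tau y^2,\tau^2 y^2,\dots$ used there, or minor variants adapted to the perturbed $\tau$). The crux is to show $\det M_1$ and $\det M_2$ cannot both vanish. I expect these determinants to factor as a power of $2$ times a power of $a$ times a low-degree polynomial in $a^2$, and the content of the hypothesis $a^2\neq 2,3$ is precisely that these polynomial factors have no common root. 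If $\dim U<9$ then $s$ and all its translates lie in $U$, forcing both determinants to be $0$; the contradiction then gives $\dim U=9$, i.e.\ absolute irreducibility.

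The main obstacle is the determinant computation: unlike in Lemma \ref{irr}, the perturbed $\vartheta$ makes the entries of $\tau$ messier, so the two determinants will be genuinely different polynomials in $a$, and I must check their resultant (or simply their sets of roots) to confirm that the only bad values of $a$ are those excluded by $a^2\in\{2,3\}$. Concretely, I anticipate one determinant to carry the factor $(a^2-3)$ coming from the rank drop of $\tau_{|S_9}$ and the other to carry $(a^2-2)$ (or a related factor), so that the joint nonvanishing is equivalent to $a^2\neq 2,3$. Thus I would select the two word-lists defining $M_1,M_2$ so as to separate these two obstructions, verify by direct expansion that $\det M_1=c_1\,a^{10}(a^2-3)\cdot(\text{nonzero})$ and $\det M_2=c_2\,a^{10}(a^2-2)\cdot(\text{nonzero})$ for nonzero constants $c_i$, and conclude. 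Everything else—the appeal to Corollary \ref{complement}, the reduction to showing the translates of $s$ span—is identical to the argument already given for Lemma \ref{irr}, so I would present only these modifications, as the section preamble instructs.
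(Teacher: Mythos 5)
Your proposal is correct and follows the paper's proof essentially verbatim: the paper likewise applies Corollary \ref{complement} to $g=[y,\tau]$ with $\lambda=1$, uses the hypothesis $a^2\neq 3$ to guarantee that the eigenvector is still $s=e_{n-8}-e_{n-7}$, and then shows that the same two matrices $M_1,M_2$ of translates of $s$ cannot both be singular. The only detail that differs from your anticipated factorization is that both determinants carry the \emph{same} factor: $\det(M_1)=-2^{35}a^6(a^2-2)(4a^4-13a^2+16)$ and $\det(M_2)=-2^{35}a^6(a^2-2)(28a^4-83a^2-16)$, so $a^2\neq 2$ removes the common factor, while $a^2\neq 3$ is needed a second time because the two quartic cofactors have a common root precisely when $p=13$ and $a^2=3$ --- a coincidence that the resultant check you explicitly planned would indeed uncover.
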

 
\begin{proof}
We have $s=e_{n-8} - e_{n-7}$ by the hypothesis $a^2\neq 3$.
Now, $\det(M_1)=-2^{35} a^6 (a^2 - 2) (4 a^4-13 a^2+16) $ and
$\det(M_2)=-2^{35} a^6 (a^2 - 2) (28 a^4 - 83 a^2 - 16)$.
Since $a^2\neq 2$, the matrices $M_1,M_2$ are both singular only if $p=13$ and $a^2=3$, which is excluded by hypothesis.
\end{proof}

\begin{lemma}\label{monS-1620}
The group $K_{|S_9}$ is neither monomial nor contained in any maximal subgroup
$\PSL_2(8)$, $\PSL_2(17)$, $\Alt(10)$, $\Sym(10)$, $\Sym(11)$ in  class $\mathcal{S}$ of $\Omega_9(q)$.
\end{lemma}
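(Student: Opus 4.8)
The statement to prove is Lemma \ref{monS-1620}, which is the exact analogue of Lemma \ref{monS} but for the cases $n \in \{12,16,20\}$. Since the groups $y$ and $\tau$ restricted to $S_9$ depend only on the parameter $a$ (through the slightly modified $\vartheta$), and the maximal subgroups of $\Omega_9(q)$ in class $\mathcal{S}$ of the listed isomorphism types are the same, my plan is to follow the structure of the proof of Lemma \ref{monS} verbatim, supplying only the computational adjustments forced by the different $\tau$.

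First I would recall that $\tau$ is an element of order $p$, so the order constraints coming from \cite[Tables 8.58 and 8.59]{Ho} reduce the candidate maximal subgroups $M$ to exactly the same finite list of cases (i)--(vi) enumerated in the proof of Lemma \ref{monS}: $M = 2^8{:}\Alt(9)$ or $2^8{:}\Sym(9)$ (the monomial/imprimitive types) for small $q \in \{3,5,7\}$, together with $\Alt(10)$, $\PSL_2(17)$, $\Sym(11)$ and $\PSL_2(8)$ for $q \in \{3,7\}$, $q=9$, $q=11$, $q=27$ respectively. This enumeration step is insensitive to the precise entries of $\vartheta$, since it depends only on $|M|$ and the divisibility conditions on $q$; so it carries over unchanged.

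The substance of the proof is then to exhibit, in each surviving case, an element of $K_{|S_9}$ whose order fails to divide $|M|$. As in Lemma \ref{monS} I would use the family $g_j = y\tau^j$, and simply recompute the relevant orders with the present $\vartheta$ (recall $\vartheta = \vartheta_0 + 8(E_{1,6}+2E_{2,8}+2E_{4,5}-E_{2,5}-2E_{1,8}-2E_{4,6})$ for $n \in \{12,16,20\}$). The plan is: for each pair $(q, a)$ consistent with the case, select an exponent $j$ so that $g_j$ has order divisible by a prime $r$ that is too large to divide $|M|$. Concretely I expect to verify, by direct computation over the finite field, statements of the form ``$g_1$ has order divisible by $41$ when $q \in \{3,9\}$,'' ``$g_3$ picks up a prime $r \geq 13$ when $q=5$,'' and the analogous assertions for $q=7,11,27$; the exact exponents $j$ and primes $r$ may differ from those in Lemma \ref{monS} because $\tau$ is different, so I would determine them case by case, branching on the finitely many admissible values of $a \in \F_q^\ast$ (those with $\F_p[a]=\F_q$ and, from Lemma \ref{irr-1620}, $a^2 \neq 2,3$) exactly as the earlier proof branches on $a = \pm 2$ versus $a \in \{\pm 1, \pm 3\}$ for $q=7$.

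The only genuine obstacle is bookkeeping: I must check that for \emph{every} admissible $a$ in each finite field the chosen $g_j$ really does acquire a large enough prime divisor, since a single bad value of $a$ would break the argument. This is a finite verification — the fields involved are $\F_3, \F_5, \F_7, \F_9, \F_{11}, \F_{27}$, each with a handful of eligible $a$ — so it can be dispatched by direct calculation, with the answer recorded as explicit orders. Having ruled out all of (i)--(vi), I conclude that $K_{|S_9}$ is neither monomial nor contained in any of the class-$\mathcal{S}$ maximal subgroups $\PSL_2(8)$, $\PSL_2(17)$, $\Alt(10)$, $\Sym(10)$, $\Sym(11)$, which is the assertion of the lemma.
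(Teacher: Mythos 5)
Your proposal follows essentially the same route as the paper: the reduction to the finite list of cases (i)--(vi) is carried over unchanged from Lemma \ref{monS}, and each surviving case is killed by exhibiting an element of $K_{|S_9}$ whose order has a prime divisor too large for $|M|$, found by direct computation over the small fields and branching on the admissible values of $a$. One caveat: you commit to the family $g_j=y\tau^j$, but the paper's verification shows this family does not suffice in every subcase --- for $q=7$ and $a=\pm 3$ it uses the word $\tilde g=\tau^2 y\tau y$ instead --- so in carrying out your finite check you would need to widen the search beyond powers $\tau^j$ exactly there; since your strategy is already ``search for a witness element of large order,'' this is a minor repair, not a structural flaw.
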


\begin{proof}
If $q\in \{3, 5, 11\}$ proceed as in the proof of Lemma \ref{monS}.
If $q=7$,  take $j=1$ if $a=\pm 1$, and $j=3$ if $a=\pm 2$; take $\tilde g=\tau^2 y \tau y$ if $a=\pm 3$.
Then, the order of $g_j$ and the order of $\tilde g$ are divisible by a prime $\varrho\geq 43$.
If $q=9$, then $g_1$ has order divisible by $13$, a prime which does divide $|\PSL_2(17)|$; 
if $q=27$,  then $g_2$ has order divisible by a prime $\varrho \in \{13, 73\}$.
\end{proof}

\begin{lemma}\label{imp-1620}
Assume $a^2 \neq 3$.
The group $K_{|S_9}$ is neither contained in a maximal subgroup in class $\mathcal{C}_2$ of $\Omega_9(q)$
nor contained in any maximal subgroup in class $\mathcal{C}_7$.
\end{lemma}

\begin{proof}
We proceed as in the proof of Lemma \ref{imp}, describing only the necessary modifications to prove the
primitivity of $K_{|S_9}$.
For $p=3$ we have $\chi_{(y\tau)^3}(t)=(t-1)f(t)$, where
$f(t)=t^8 -t^7 - (a^{12} - a^6 +1) t^6 - a^{12} t^5 + (a^6 -1) t^4 - a^{12} t^3 - (a^{12} - a^6 + 1) t^2 -t + 1$.
Also in this case, $f(1)=-a^{12}$ and $f(-1)=1$.
If $p\neq 3$, the product $y\tau$ should have trace $0$, in contrast with $\tr(y\tau) =-16$.
\end{proof}

\begin{lemma}\label{L2-1620}
Assume $a^2 \neq 3$.
The group $K_{|S_9}$ is not contained in any maximal subgroup $M\cong \PSL_2(q).2$ or $M\cong \PSL_2(q^2).2$ in class $\mathcal{S}$ 
of $\Omega_9(q)$. 
\end{lemma}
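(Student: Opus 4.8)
The plan is to mirror the proof of Lemma \ref{L2} almost verbatim. The two maximal subgroups $\PSL_2(q).2$ and $\PSL_2(q^2).2$, together with their degree-$9$ representations $\Phi$, are intrinsic features of $\Omega_9(q)$ that depend neither on $n$ nor on $a$; the only place where $a$ enters the argument is through the unipotent element $\tau$, and specifically through the dimension of its fixed point space. So the whole proof reduces to checking that this dimension is the same as in Lemma \ref{L2}.

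First I would recall from the computation at the end of Section \ref{gen} that, under the standing hypothesis $a^2\neq 3$, the restriction $\tau_{|S_9}$ has a fixed point space of dimension exactly $5$ (it is only the excluded value $a^2=3$ that raises this to $7$). Thus $\tau$ is, as in Lemma \ref{L2}, a unipotent element of order $p$ with minimal polynomial $(t-1)^3$, and all the conjugation-invariant data used in the original argument, namely the order of $\tau$ and the dimension of $V_1(\tau)$, are unchanged. Then, for $M\cong \PSL_2(q).2$, since all nontrivial unipotent elements of $\PSL_2(q)$ are conjugate in $\GL_2(q)$, up to conjugacy in the ambient $\GL_9(\F)$ the element $\tau$ must be $\Phi(\I_2+E_{1,2})$, whose fixed point space is spanned by $t_1^8$ and hence is $1$-dimensional; this contradicts $\dim V_1(\tau)=5$. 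For $M\cong \PSL_2(q^2).2$ the same reasoning forces $\tau=\Phi(\psi(\I_2+E_{1,2}))$, whose fixed point space on $W\leq \Mat_3(q^2)$ is spanned by $E_{2,2}, E_{3,3}, E_{2,3}+E_{3,2}$ and so has dimension $3$, again contradicting $5$. These are exactly the two contradictions of Lemma \ref{L2}.

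The only point demanding attention, and the sole reason the hypothesis $a^2\neq 3$ appears in the statement, is the verification that $\tau_{|S_9}$ has fixed point space of dimension $5$ rather than $7$; once this is in hand, the dimension mismatches $5\neq 1$ and $5\neq 3$ close both cases with no further computation. I do not expect any genuine obstacle beyond confirming this dimension count, which is already recorded at the end of Section \ref{gen}.
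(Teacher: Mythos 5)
Your proposal is correct and coincides with the paper's intent: the paper in fact omits the proof of Lemma \ref{L2-1620} entirely, since (as stated at the start of Section \ref{16-20}) only the necessary modifications to the arguments of Section \ref{gene} are recorded, and for this lemma the sole modification is precisely the one you identify, namely that the hypothesis $a^2\neq 3$ guarantees $\dim V_1(\tau_{|S_9})=5$, after which the two contradictions $5\neq 1$ and $5\neq 3$ from Lemma \ref{L2} apply verbatim.
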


\begin{proposition}
Assume $q$ odd and $n\in\{12,16,20\}$.
Let $a \in \F_q^*$ be such that $\F_p[a^2]=\F_q$ with $a^2\neq 2,3$.
Then  $K_{|S_9}=\Omega_9(q)$.
\end{proposition}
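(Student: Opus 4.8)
The plan is to mirror the proof of Proposition~\ref{Om9-K9}, substituting for each ingredient its $n\in\{12,16,20\}$ counterpart. Since the hypotheses include $a^2\neq 2,3$, Lemmas~\ref{irr-1620} and~\ref{imp-1620} apply and give that $K_{|S_9}$ is absolutely irreducible, primitive and tensor indecomposable. Combining these with Lemmas~\ref{monS-1620} and~\ref{L2-1620}, which rule out the monomial case together with all the pertinent maximal subgroups in class~$\mathcal{S}$ (that is, $\PSL_2(8)$, $\PSL_2(17)$, $\Alt(10)$, $\Sym(10)$, $\Sym(11)$, $\PSL_2(q).2$ and $\PSL_2(q^2).2$), the maximal-subgroup analysis of $\Omega_9(q)$ carried out exactly as in Proposition~\ref{Om9-K9} leaves only two alternatives: either $K_{|S_9}=\Omega_9(q)$, which is the assertion, or $K_{|S_9}$ lies in a subfield subgroup $M\in\{\Omega_9(q_0),\SO_9(q_0)\}$ in class~$\mathcal{C}_5$, with $q=q_0^r$ for some prime $r\geq 2$.

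Everything then reduces to excluding this $\mathcal{C}_5$ possibility, and this is the only place where the field-generation hypothesis enters. Suppose $\tau^g=\tau_0$ and $y^g=y_0$ with $\tau_0,y_0\in\GL_9(q_0)$ for some $g\in\GL_9(\F)$; then the trace of every element of $K_{|S_9}$ lies in $\F_{q_0}$. The crucial difference with Proposition~\ref{Om9-K9} is that our hypothesis is now $\F_p[a^2]=\F_q$ rather than $\F_p[a]=\F_q$. Consequently it is enough to extract from such invariant traces a single relation forcing $a^2\in\F_{q_0}$: this yields $\F_q=\F_p[a^2]\leq\F_{q_0}$, contradicting $q=q_0^r$ with $r\geq 2$. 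In particular, and unlike the general case, no trace involving an odd power of $a$ is required.

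Concretely, I would recompute, with respect to the modified element $\tau=\diag(\I_{n-8},\vartheta)$ proper to $n\in\{12,16,20\}$, the traces of a short list of words such as $(y\tau)^2$ and $(y^2\tau)^2$, obtaining explicit polynomials in $a$ with prime-field coefficients. An appropriate $\F_p$-linear combination of these trace values, all lying in $\F_{q_0}$, should then isolate a nonzero scalar multiple of $a^2$, giving $a^2\in\F_{q_0}$ and the desired contradiction. The main obstacle is exactly this bookkeeping: the perturbation $8(E_{1,6}+2E_{2,8}+2E_{4,5}-E_{2,5}-2E_{1,8}-2E_{4,6})$ distinguishing $\vartheta$ from $\vartheta_0$ alters the numerical trace values relative to~\eqref{tr1}, so the polynomials must be recomputed for the present $\tau$, and one must verify that the scalar multiplying $a^2$ in the chosen combination is prime to $p$ in each residual characteristic compatible with a $\mathcal{C}_5$ embedding, so that the extraction of $a^2$ never degenerates.
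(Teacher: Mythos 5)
Your proposal takes essentially the same route as the paper's own proof: the identical chain of lemmas reduces the problem to excluding the class $\mathcal{C}_5$ subfield subgroups, and the paper then performs exactly the trace computation you outline, obtaining $\tr\left((y\tau)^2\right)=-2176a^4+6784a^2-224$ and $\tr\left((y^2\tau)^2\right)=1920a^4-5504a^2-288$, so that $-17a^4+53a^2$ and $15a^4-43a^2$ lie in $\F_{q_0}$ and the combination $15(-17a^4+53a^2)+17(15a^4-43a^2)=64a^2$ gives $a^2\in\F_{q_0}$, the coefficient $64$ being a power of $2$ and hence a unit since $q$ is odd. Your two key observations, namely that the hypothesis $\F_p[a^2]=\F_q$ means only $a^2$ (not $a$, unlike Proposition \ref{Om9-K9}) must be forced into $\F_{q_0}$, and that the extracted scalar must be prime to $p$, are precisely the points on which the paper's argument turns.
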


\begin{proof}
By Lemmas \ref{irr-1620} and \ref{imp-1620}, $K_{|S_9}$ is absolutely irreducible 
and is neither contained in a maximal subgroup in class $\mathcal{C}_2$ of $\Omega_9(q)$
nor contained in any maximal subgroup in class $\mathcal{C}_7$.
Furthermore, by Lemmas \ref{monS-1620} and \ref{L2-1620}  either $K_{|S_9}=\Omega_9(q)$ or $K_{|S_9}$ is contained in a maximal subgroup
$M\in \{\Omega_9(q_0), \SO_9(q_0)\}$  in class $\mathcal{C}_5$, where $q=q_0^r$ for some prime $r\geq 2$.
Suppose there exists $g\in \GL_{9}(\F)$ such that
$\tau^g=\tau_0$, $y^g= y_0$, with $\tau_0,y_0\in \GL_{9}(q_0)$.
From $\tr\left((y\tau)^2\right)=-2176a^4 + 6784a^2 - 224$
and $\tr\left((y^2\tau)^2\right)=1920a^4 - 5504a^2 - 288$, we get that $-17a^4+53a^2$ and $15a^4-43a^2$ belong to $\F_{q_0}$, 
whence $64a^2\in \F_{q_0}$.
We conclude that  $K_{|S_9}=\Omega_9(q)$.
\end{proof}

\begin{corollary}\label{cor_121620}
Assume $q$ odd and $n\in\{12,16,20\}$. Let $a \in \F_q^*$ be such that $\F_p[a^2]=\F_q$ with $a^2\neq 2,3$.
Then   $H=\Omega_{n}^+(q)$.
In particular, $\Omega_n^+(q)$ is $(2,3)$-generated.
\end{corollary}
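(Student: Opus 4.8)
The plan is to follow the same strategy that was used in Corollary~\ref{cor_gen}, lifting the known generation of $\Omega_9(q)$ on the subspace $S_9$ up to the full group $\Omega_n^+(q)$ one dimension at a time. First I would invoke the preceding Proposition to conclude that $K'=\diag(\I_{n-9},\Omega_9(q))\leq H$, which guarantees that the set of $\ell$ with $K_\ell:=\diag(\I_{n-\ell},\Omega_\ell^\epsilon(q))\leq H$ is nonempty and contains $\ell=9$. Taking $\ell$ maximal with this property, the goal is to derive a contradiction from $9\leq \ell<n$, thereby forcing $\ell=n$ and hence $H=\Omega_n^{\overline\epsilon}(q)$ for the appropriate sign $\overline\epsilon$. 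Since $n\in\{12,16,20\}$ is always even with $n\equiv 0\pmod 4$, the sign bookkeeping should reduce to $\overline\epsilon=+$.

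The heart of the argument is the inductive step. For the generic residue of $\ell$ modulo $3$, I would exhibit a single generator $g\in\{x,y\}$ that fixes both $S_{\ell-1}$ (or $S_{\ell-j}$) and an initial block $E_{n-\ell-1}$, while acting as a transposition $(e_{n-\ell},e_{n-\ell+1})$ when $3\mid\ell$, or as a $3$-cycle $(e_{n-\ell-2+j},e_{n-\ell-1+j},e_{n-\ell+j})$ when $\ell\equiv j\pmod 3$ with $j\in\{1,2\}$. Exactly as in Corollary~\ref{cor_gen}, this makes $g^{-1}S_\ell$ differ from $S_\ell$ only by swapping one basis vector, so $\langle S_\ell, g^{-1}S_\ell\rangle=S_{\ell+1}$, and the group $K_{\ell+1}:=\langle K_\ell, K_\ell^g\rangle$ lands inside $\diag(\I_{n-\ell-1},\Omega_{\ell+1}^{\overline\epsilon}(q))$. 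To show this containment is an equality, I would use the reflection-type element $\rho\in K_\ell^g$ (determinant $1$, spinor norm a square) that generates, together with $K_\ell$, the stabilizer of the nondegenerate line $\langle e_{n-\ell}\rangle$, a maximal subgroup of $\Omega_{\ell+1}^{\overline\epsilon}(q)$; since $K_{\ell+1}$ is not contained in this stabilizer, maximality gives $K_{\ell+1}=\diag(\I_{n-\ell-1},\Omega_{\ell+1}^{\overline\epsilon}(q))$, contradicting the maximality of $\ell$.

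The remaining work is to dispose of the boundary residues where the generic choice of $g$ does not fix the required blocks, namely the cases $(r,\ell)\in\{(2,n-2),(2,n-1)\}$ together with the even-$n$ cases $(r,\ell)\in\{(1,n-4),(2,n-8)\}$, which correspond to $n=16$ (where $r=1$) and $n=20$ (where $r=2$); for $n=12$ one has $r=0$, so only the $r=2$ cases need not be checked there. In each exceptional case I would, following the four cases at the end of Corollary~\ref{cor_gen}, pass to the stabilizer $R$ in $K_\ell$ of a suitable basis vector and show that $\langle R^x,K_\ell\rangle$ fixes an explicit initial segment $e_1,\dots,e_t$ together with its orthogonal complement $E_t^\perp$, thereby inducing $\Omega_{n-t}(q)$ on $E_t^\perp$ and jumping past $\ell$ to produce the same contradiction. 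The main obstacle will be verifying the fixed-point and invariant-subspace claims in these few exceptional configurations, since the specific permutation action of $x$ on the relevant basis vectors must be tracked carefully to confirm that $R^x$ supplies the extra generator needed to enlarge $K_\ell$; everything else is a direct transcription of the inductive machinery already established. Finally, the $(2,3)$-generation statement follows immediately, since $x,y$ have orders $2,3$ by construction.
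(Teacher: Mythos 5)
Your treatment of the main assertion ($H=\Omega_n^+(q)$ under the stated hypotheses on $a$) is correct and is exactly the paper's route: the paper's proof literally consists of the sentence ``repeat the argument of Corollary~\ref{cor_gen}'', and you have spelled out that argument faithfully, with the right sign bookkeeping (for $n\equiv 0\pmod 4$ the quantity $\frac{n(q-1)}{4}$ is even, so $H\leq\Omega_n^+(q)$) and the right identification of which exceptional cases $(r,\ell)$ are relevant for $n=12,16,20$.

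However, your last sentence hides a genuine gap. The clause ``In particular, $\Omega_n^+(q)$ is $(2,3)$-generated'' is an \emph{unconditional} statement about every odd $q$, whereas everything you proved is conditional on the existence of some $a\in\F_q^*$ with $\F_p[a^2]=\F_q$ and $a^2\neq 2,3$. Saying that the $(2,3)$-generation ``follows immediately, since $x,y$ have orders $2,3$ by construction'' does not address this: you must still produce such an $a$ for every odd prime power $q$, and this is where the paper spends the second half of its proof. Concretely, for $q=p$ one can take $a=1$ (then $\F_p[a^2]=\F_p=\F_q$ and $a^2=1\neq 2,3$ since $p$ is odd); for $q=p^f$ with $f\geq 2$ one needs a counting argument, as in \cite[Lemma 2.7]{PTV}, showing that the number of $b\in\F_q^*$ with $\F_p[b^2]\neq\F_q$ is at most $2p\frac{p^{\lfloor f/2\rfloor}-1}{p-1}<p^f-1$, so a generator $a$ exists; moreover, for $f\geq 2$ the constraints $a^2\neq 2,3$ come for free, because $2,3\in\F_p$ lie in a proper subfield and hence cannot satisfy $\F_p[a^2]=\F_q$. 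Without this existence step the ``in particular'' conclusion is unproven.
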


\begin{proof}
Since $K_{|S_9}=\Omega_9(q)$, we can repeat the argument of Corollary \ref{cor_gen}, proving that $H=\Omega_{n}^+(q)$.
For the second part of the statement, we have to prove that there exists an element $a$ satisfying all the hypotheses. 
If $q=p$, take $a=1$.
Suppose now $q=p^f$ with $f\geq 2$, and let $\N(q)$ be the number of elements $b\in \F_q^*$ such that $\F_p[b^2]\neq \F_q$.
By \cite[Lemma 2.7]{PTV} it suffices to check that the condition $p^f   - 2p \frac{p^{\lfloor f/2 \rfloor}-1}{p-1}> 1$
is always fulfilled (the requirements $a^2\neq 2,3$ can be dropped).
\end{proof}

\section{Conclusions}
 
We can now prove our main result.

\begin{proof}[Proof of Theorem \ref{main}.] The $(2,3)$-generation of $\Omega_n(q)$, $nq$ odd, follows from  Theorem \ref{th-1137} when $n\in \{9,11,13,17\}$,  and Corollary \ref{cor_gen} for the other values of $n$. Due to Corollaries \ref{cor_gen} and \ref{cor_121620}, we also proved the $(2,3)$-generation of the following
even-dimensional orthogonal groups:
$\Omega_{2k}^+(q)$, when $q\equiv 1 \pmod 4$ and $k=6$ or $k\geq 8$;
$\Omega_{4k}^+(q)$, when $q\equiv 3\pmod 4$ and $k\geq 3$;
$\Omega_{4k+2}^-(q)$, when $q\equiv 3\pmod 4$ and $k\geq 4$.
\end{proof}

\end{document}